\def\LC{\mathcal{L}}
\def\R{\mathbf{R}}
\def\1{\mathbf{1}}
\def\al{\alpha}
\def\be{\beta}
\def\pa{\partial}
\def\ep{\epsilon}
\def\de{\delta}
\def\ka{\varkappa}
\numberwithin{equation}{section}
\newtheorem{theorem}{Theorem}[section]
\newtheorem{remark}{Remark}[section]
\newcommand{\om}{\omega}
\newcommand{\Ga}{\Gamma}
\newcommand{\De}{\Delta}
\begin{document}
\title{Abstract McKean-Vlasov and HJB equations, their fractional versions and related forward-backward systems on Riemannian manifolds \thanks{The work of V.N. Kolokoltsov (Sections 1-7) was supported by the Russian Science Foundation (project No. 20-11-20119), the work of M.S. Troeva (Sections 8-10) was supported by the Ministry of Science and Higher Education of the Russian Federation (Grant No. FSRG-2020-0006).}
}
\author{
	Vassili N. Kolokoltsov\thanks{Department of Statistics, University of Warwick,
		Coventry CV4 7AL UK, and National Research University Higher School of Economics, Pokrovskii bul. 11, Moscow, Russia., Email: v.kolokoltsov@warwick.ac.uk}
	and Marianna Troeva\thanks{Research Institute of Mathematics, North-Eastern Federal University,
		58 Belinskogo str., Yakutsk 677000 Russia, Email: troeva@mail.ru
	}}		
\maketitle

\begin{abstract}
We introduce a class of abstract nonlinear fractional pseudo-differential equations
in Banach spaces that includes both the Mc-Kean-Vlasov-type equations describing nonlinear Markov processes
and the Hamilton-Jacobi-Bellman(HJB)-Isaacs equation of stochastic control and games thus allowing for
a unified analysis of these equations. This leads to an effective theory of coupled forward-backward systems
(forward McKean-Vlasov evolution and backward HJB-Isaacs evolution) that are central to the
modern theory of mean-field games. 
\end{abstract}

{\bf Mathematics Subject Classification (2010)}: {34A08, 35S15, 45G15}
\smallskip\par\noindent
{\bf Key words}: fractional McKean-Vlasov-type equations on manifolds,
fractional HJB-Isaacs equations on manifolds,
fractional forward-backward systems on manifolds, dual Banach triples,  mild solutions,
Caputo-Dzherbashyan fractional derivative, smoothing and smoothness preserving
operator semigroups

\section{Introduction}

We introduce a class of abstract nonlinear fractional pseudo-differential equations
in Banach spaces that includes both the Mc-Kean-Vlasov-type equations describing nonlinear Markov processes
and the Hamilton-Jacobi-Bellman(HJB)-Isaacs equation of stochastic control and games thus allowing for
a unified analysis of these equations. Looking at these equations as evolving in dual
Banach triples allows us to recast directly the properties of one type to the properties
of another type leading to an effective theory of coupled forward-backward systems
(forward McKean-Vlasov evolution and backward HJB-Isaacs evolution) that are central to the
modern theory of mean-field games. We are working with the mild solutions to the fractional
nonlinear equations that are based on the Zolotarev integral representation for the
Mittag-Leffler functions. The abstract setting developed allows us to include in our analysis
the related nonlinear fractional equations and forward-backward systems on manifolds yielding
results that are possibly new even for classical (not fractional) equations.
We obtain the well-posedness results for these equations. 

The present work is the continuation  of the studies by the authors in \cite{KoTr17a, KoTr18, KoTr19}.

We shall analyse the nonlinear Cauchy problems of the form
\begin{equation}
\label{eqabstrHJBMV}
\dot b(t)=Ab(t)+H(t,b(t),Db(t),\al), \quad b(a)=Y, \quad t\ge a,
\end{equation}
where $A$, $D_1, \cdots, D_n$ are unbounded linear operators in a Banach space $B$, $D=(D_1, \cdots, D_n)$, $\al$
is a parameter from another Banach space $B^{par}$ and $H$ is a continuous mapping
$\R\times B\times B^n \times B^{par}\to B$,
their fractional counterparts
\begin{equation}
\label{eqabstrHJBMVf}
D^{\be}_{a+*} b(t)=Ab(t)+H(t,b(t),Db(t), \al), \quad b(a)=Y \quad t\ge a,
\end{equation}
where $D^{\be}_{a+*}$ is the Caputo-Dzherbashyan (CD)  fractional derivative of order $\be\in (0,1)$,
\begin{equation}
\label{defCDder}
D^{\be}_{a+*} b(t)=\frac{1}{\Ga(-\be)}\int_0^{t-a}\frac{b(t-z)-b(t)}{z^{1+\be}} dz+\frac{b(t)-b(a)}{\Ga(1-\be)(t-a)^{\be}},
\end{equation}
the anticipating versions of these equations (where $H$ depends additionally on the future values of $b(s)$),
and the forward-backward systems of coupled equations of this type, which represent the main class of systems studied in the modern theory of mean-field games.

\begin{remark}
We gave the explicit formula for fractional derivative, which is a consequence of its more standard definition as
$D^{\be}_{a+*} b(t)=I_a^{1-\be} (d/dt) b(t)$ via the fractional integral $I^{\be}$.
\end{remark}

Our main examples concern the case when $B$ is a space of functions on $\R^d$
and $D$ is the gradient (derivative) operator. Specifically,
the fractional Hamilton-Jacobi-Bellmann-Isaacs (HJB-Isaacs)
equation of controlled Markov processes (with an external parameter) is the equation of the form
\begin{equation}
\label{eqHJBfr}
D^{\be}_{a+*} f(t,x)=Af(t,x)+H(t,x,f(t,x),\frac{\pa f}{\pa x}(t,x), \al),
\end{equation}
for which the most natural Banach space is $B=C_{\infty}(\R^d)$ ($C_{\infty}(\R^d)$ is the Banach space of continuous functions $f: \R^d\to \R$ tending to zero at infinity equipped with the sup-norm).
The Hamiltonian $H$ arising from optimal control usually even does not depend explicitly on $f$,
just on its gradient, and it writes down as
\begin{equation}
\label{eqHJBfrHam}
H(t,x,f,p, \al)=H(t,x,p,\al)=\sup_{u\in U} [J(t,x,u)+g(t,x,u)p],
\end{equation}
with some functions $J,g$, where $U$ is a compact set of controls
(or with $\inf\sup$ instead of just $\sup$ in case of Isaacs equations).
For such $H$ the fractional equation \eqref{eqHJBfr} was derived in
\cite{KoVe14} as a Bellman equation for optimal control of scaled limits of continuous time random walks.
The fractional version of the McKean-Vlasov type equations
 (describing nonlinear Markov processes in the sense of \cite{Ko10}) are the quasi-linear equations of type
\begin{equation}
\label{eqMVfr}
D^{\be}_{a+*} f(t,x)=A^*f(t,x)+\sum_{j=1}^d h_j(t,x,\{f(t,.)\},\al)\frac{\pa f}{\pa x_j}(t,x),
\end{equation}
for which the most natural Banach space is $L_1(\R^d)$ (or the space of Borel measures on $\R^d$).
In these equations $A$ is the generator of a Feller process in $\R^d$ and $A^*$ is its dual operators.
While $H$ in  \eqref{eqHJBfr} depends on the point-wise values of $f$, the functions $h_j$ in \eqref{eqMVfr}
usually depend on some integrals of $f$. The abstract framework of equations \eqref{eqabstrHJBMVf}
allows one to treat these cases in a unified way, as well as to include in the theory in a more or less straightforward way important new cases, for instance, fractional HJB-Isaacs or McKean-Vlasov-type equations on manifolds.

Fully analogous results of course hold for the backward versions of the Cauchy problems above,
namely for the problems
\begin{equation}
\label{eqabstrHJBMVback}
\dot b(t)=-Ab(t)+H(t,b(t),Db(t),\al), \quad b(T)=Y, \quad t\le T,
\end{equation}
and its fractional counterparts
\begin{equation}
\label{eqabstrHJBMVfback}
D^{\be}_{T-*} b(t)=- Ab(t)+H(t,b(t),Db(t), \al), \quad b(T)=Y \quad t\le T,
\end{equation}
where $D^{\be}_{T-*}$ is the right Caputo-Dzherbashyan (CD)  fractional derivative of order $\be\in (0,1)$:
\begin{equation}
\label{defCDderr}
D^{\be}_{T-*} b(t)=\frac{1}{\Ga(-\be)}\int_0^{T-t}\frac{b(t+z)-b(t)}{z^{1+\be}} dt
+\frac{b(t)-b(T)}{\Ga(1-\be)(T-t)^{\be}}.
\end{equation}

The content of the paper is as follows. In the next two sections, we recall preliminary material from the theory of the Mittag-Leffler functions and fixed point principles. 
In Section \ref{abstrMVHJB} we present results on the 
well-posedness of equations \eqref{eqabstrHJBMV} and \eqref{eqabstrHJBMVf} in the sense of mild solutions.
In Section \ref{anticipMVHJB} we prove the local well-posedness for the anticipating versions of equations \eqref{eqabstrHJBMV} and \eqref{eqabstrHJBMVf} for sufficiently small $T-a$.
In the next two sections \ref{forwbakwsystem} and \ref{fractforwbakwsystem} we present results on the well-posedness of the abstract coupled forward-backward system and the fractional version of this forward-backward system.

In Section \ref{secdualtriples} we specify the abstract model for coupled forward-backward systems and those fractional analogs looking at these equations as evolving in dual Banach triples. We obtain the local well-posedness results for the fractional coupled forward-backward system consisting of the coupled McKean Vlasov (forward) and HJB (backward) equations for $t\in [a,T]$.

In Sections \ref{seceqman} and \ref{seceqsysman}, the abstract setting developed in previous Section \ref{secdualtriples} allows us to prove the well-posedness results for the nonlinear fractional equations and fractional coupled forward-backward systems on manifolds.

We shall work everywhere with mild solutions, but the standard arguments allow one to get natural conditions that ensure that mild solutions are in fact classical, see Theorem 6.1.3 of \cite{Kobook19}. 

Note that fractional equations have become a popular subject of research due to their wide applicability in various fields of natural sciences (see e.g. \cite{leonmeersik13,Podlub99,Pskhu11}). We specially note the works \cite{AgrawGenerCalcVar10} and \cite{AtDoPiSt}, where the generalized Euler-Lagrange equations and linear fractional differential equations are considered. Numerical methods for fractional equations are presented in the papers \cite{Scalabook,Kobook21}. The problem with two-sided fractional derivatives was analyzed in \cite{HerHer16a}. Generalized fractional equations were considered in the works \cite{Kir94,KochKondr17,Ko15}.

\section{Preliminaries on the Mittag-Leffler functions}

By $E_{\be}(x)$ we denote the standard Mittag-Leffler function of index $\be$:
\[
E_{\be}(x)=\sum_{k=0}^{\infty}\frac{x^k}{\Ga(\be k +1)}.
\]
For our purpose the most convenient formula for the Mittag-Leffler function is its integral representation (Zolotarev formula, or Zolotarev-Pollard formula)
\begin{equation}
\label{eqMitLefZol}
E_{\be}(s)=\frac{1}{\be} \int_0^{\infty} e^{sx} x^{-1-1/\be} G_{\be} (1, x^{-1/\be}) \, dx,
\end{equation}
where
\[
G_{\be}(t,x)=\frac{1}{\pi} {Re} \int_0^{\infty} \exp\left\{ ipx -tp^{\be} e^{i\pi \be/2}\right\} dp
\]
is the heat kernel (solution with the Dirac initial condition) of the equation
\[
\frac{\pa G}{\pa t} (t,x)=-\frac{\pa ^{\be}}{ \pa x^{\be}} G(t,x),
\]
or, in probabilistic terms, the transition probability density of the stable L\'evy subordinator of index $\be$.
The convenience of this formula is due to the fact that it allows one to define $E_{\be}(A)$ for an operator
$A$ whenever $A$ generates a semigroup, so that $e^{At}$ is well defined.

From \eqref{eqMitLefZol} it follows that
\begin{equation}
\label{eqMitLefZol1}
E'_{\be}(s)=\frac{1}{\be} \int_0^{\infty} e^{sx} x^{-1/\be} G_{\be} (1, x^{-1/\be}) \, dx,
\end{equation}
so that the integral on r.h.s. is finite.

We also need the well known formula for the Mellin transform of $G_{\be}$:
\begin{equation}
\label{eqMitLefMel}
\int_0^{\infty} x^{-\om} G_{\be} (1, x^{-1/\be}) \, dx=\frac{\Ga(1-\om +1/\be)}{\be \Ga(\be-\be \om +1)},
\end{equation}
valid for $\om <1+1/\be$, see proof e.g. in Proposition 8.1.1 of \cite{Kobook19}.

\section{Preliminaries on the fixed-point principle for integral curves}

For a Banach space $B$ and $\tau<t$ we denote $C([\tau,t],B)$ the Banach space of continuous functions
$f:[\tau,t] \to B$ with the norm
\[
\|f\|_{C([\tau,t],B)}=\sup_{s\in[\tau,t]} \|f(s)\|_B,
\]
and $C_Y([\tau,t],B)$ its closed subset consisting of functions $f$ such that $f(\tau)=Y$, which is
a complete metric space under the induced topology.

For a closed convex subset $M$ of $B$, $C_Y([\tau,t],M)$
denotes a convex subset of $C_Y([\tau,t],B)$ of functions with values in $M$.

The following result is Theorem 2.1.3 from  \cite{Kobook19}. It is a version of the fixed-point principle,
specifically tailored to be used for nonlinear diffusions and fractional equations.

\begin{theorem}
\label{thabstractwelpos}
Suppose that for any $Y\in M$, $\al\in B_1$, with $B_1$ another Banach space,
a mapping $\Phi_{Y,\al} :C([\tau,T],M) \to C_Y([\tau,T],M)$ is given with some $T>\tau$ such that
for any $t$ the restriction of $\Phi_{Y,\al}(\mu_.)$ on $[\tau,t]$ depends only
on the restriction of the function $\mu_s$ on $[\tau,t]$. Moreover,
\begin{equation}
\label{eq1thabstractwelposfrac}
\begin{aligned}
& \|[\Phi_{Y,\al} (\mu^1_.)](t)-[\Phi_{Y,\al} (\mu^2_.)](t)\|
\le L(Y) \int_\tau^t (t-s)^{-\om}\| \mu^1_.-\mu^2_.\|_{C([\tau,s],B)} \, ds, \\
& \|[\Phi_{Y_1,\al_1} (\mu_.)](t)-[\Phi_{Y_2,\al_2} (\mu_.)](t)\| \le \ka \|Y_1-Y_2\|+\ka_1 \|\al_1-\al_2\|,
\end{aligned}
\end{equation}
for all $t\in [\tau,T]$, $\mu^1, \mu^2 \in C([\tau,T],M)$, $\al_1, \al_2 \in B_1$,
some constants $\ka, \ka_1\ge 0$, $\om \in [0,1)$,
and a continuous function $L$ on $M$.

Then for any $Y\in M$, $\al\in B_1$  the mapping $\Phi_{Y,\al}$ has a unique fixed point
$\mu_{t,\tau}(Y,\al)$ in $C_Y([\tau,T],M)$.
Moreover, if $\om>0$, then for all $t\in [\tau,T]$,
\begin{equation}
\label{eq3thabstractwelposfrac}
\|\mu_{t,\tau}(Y,\al)-Y\| \le E_{1-\om}(L(Y)\Ga(1-\om)(t-\tau)^{1-\om}) \|[\Phi_{Y,\al} (Y)](t)-Y\|,
\end{equation}
and the fixed points $\mu_{t,\tau}(Y_1,\al_1)$ and $\mu_{t,\tau}(Y_2,\al_2)$ with different initial data $Y_1,Y_2$
and parameters $\al_1,\al_2$ enjoy the estimate (for any $j=1,2$)
\begin{equation}
\label{eq4thabstractwelposfrac}
\|\mu_{t,\tau}(Y_1,\al_1)-\mu_{t,\tau}(Y_2,\al_2)\|
\le (\ka \|Y_1-Y_2\|+\ka_1\|\al_1-\al_2\|)  E_{1-\om}(L(Y_j)\Ga(1-\om)(t-\tau)^{1-\om}).
\end{equation}

If $\om=0$, these estimates are simplified to
\begin{equation}
\label{eq3thabstractwelpos}
\|\mu_{t,\tau}(Y,\al)-Y\| \le e^{(t-\tau)L(Y)} \|[\Phi_{Y,\al} (Y)](t)-Y\|,
\end{equation}
 \begin{equation}
\label{eq4thabstractwelpos}
\|\mu_{t,\tau}(Y_1,\al_1)-\mu_{t,\tau}(Y_2,\al_2)\|
\le (\ka \|Y_1-Y_2\|+ \ka_1 \|\al_1-\al_2\|) \exp\{(t-\tau)\min(L(Y_1), L(Y_2)) \}.
\end{equation}
\end{theorem}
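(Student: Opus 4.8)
The plan is to set up a Picard-type iteration directly on the complete metric space $C_Y([\tau,T],M)$ and control the iterates using the Mittag-Leffler function as an integrating factor for the singular Gronwall-type inequality in \eqref{eq1thabstractwelposfrac}. First I would observe that since $\Phi_{Y,\al}$ maps $C([\tau,T],M)$ into $C_Y([\tau,T],M)\subset C([\tau,T],M)$, the iterates $\mu^{(k)}=\Phi_{Y,\al}(\mu^{(k-1)})$ starting from the constant function $\mu^{(0)}\equiv Y$ are well-defined and stay in $M$. The causality hypothesis — that the restriction of $\Phi_{Y,\al}(\mu_.)$ to $[\tau,t]$ depends only on $\mu_.$ on $[\tau,t]$ — is what makes the local-in-time estimates propagate, and it will be used to write, for $g_k(t):=\|\mu^{(k+1)}_.-\mu^{(k)}_.\|_{C([\tau,t],B)}$, the recursive bound
\[
g_{k+1}(t)\le L(Y)\int_\tau^t (t-s)^{-\om} g_k(s)\,ds .
\]

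The key computation is to iterate this inequality. Starting from $g_0(t)=\|\Phi_{Y,\al}(Y)(t)-Y\|_{C([\tau,t],B)}$ (a bounded, nondecreasing function of $t$ on $[\tau,T]$, bounded by some constant $c_0$), repeated substitution and the Beta-integral identity $\int_\tau^t (t-s)^{-\om}(s-\tau)^{\be(k)}\,ds$ with $\be(k)=k(1-\om)$ yield
\[
g_k(t)\le c_0\,\frac{\big(L(Y)\Ga(1-\om)\big)^k (t-\tau)^{k(1-\om)}}{\Ga(k(1-\om)+1)} .
\]
Summing over $k$ produces exactly $E_{1-\om}\big(L(Y)\Ga(1-\om)(t-\tau)^{1-\om}\big)$, which is finite for all $t$; hence $\sum_k g_k(t)<\infty$ uniformly on $[\tau,T]$, the sequence $\mu^{(k)}$ is Cauchy in $C([\tau,T],B)$, and its limit $\mu_{t,\tau}(Y,\al)$ lies in the closed set $C_Y([\tau,T],M)$ and is a fixed point by continuity of $\Phi_{Y,\al}$. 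Uniqueness follows the same way: if $\mu,\nu$ are two fixed points, then $h(t):=\|\mu_.-\nu_.\|_{C([\tau,t],B)}$ satisfies $h(t)\le L(Y)\int_\tau^t (t-s)^{-\om} h(s)\,ds$, and iterating this inequality $k$ times bounds $h(t)$ by $h(T)$ times the $k$-th term above, which tends to $0$ as $k\to\infty$; this is the singular Gronwall lemma. Estimate \eqref{eq3thabstractwelposfrac} then comes out of $\|\mu_{t,\tau}(Y,\al)-Y\|\le \sum_{k\ge 0} g_k(t)$ with the partial-sum bound, using that $g_0(t)=\|\Phi_{Y,\al}(Y)(t)-Y\|$.

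For \eqref{eq4thabstractwelposfrac} I would run essentially the same iteration but on the difference of the two fixed points with different data. Writing $\psi(t):=\|\mu_{t,\tau}(Y_1,\al_1)-\mu_{t,\tau}(Y_2,\al_2)\|$ and using both lines of \eqref{eq1thabstractwelposfrac} — the first line with $Y=Y_j$ to contract in the $\mu$-argument and the second line to absorb the change of initial datum and parameter — one gets
\[
\psi(t)\le \ka\|Y_1-Y_2\|+\ka_1\|\al_1-\al_2\| + L(Y_j)\int_\tau^t (t-s)^{-\om}\,\psi(s)\,ds ,
\]
and the generalized Gronwall inequality with a Mittag-Leffler kernel converts this into \eqref{eq4thabstractwelposfrac}; since this works for either $j$ one may keep whichever gives the smaller constant. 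The case $\om=0$ is the classical situation: the Beta integral degenerates to $(t-\tau)^k/k!$, the series $E_1$ is just $\exp$, and \eqref{eq3thabstractwelpos}, \eqref{eq4thabstractwelpos} drop out of the same scheme. The main obstacle, and the only place requiring real care, is the bookkeeping in the iterated singular integral: one must verify the Beta-function identity $\int_\tau^t (t-s)^{-\om}(s-\tau)^{\ga}\,ds=\Ga(1-\om)\Ga(\ga+1)/\Ga(\ga+2-\om)\cdot (t-\tau)^{\ga+1-\om}$ and check that the induction on $k$ reproduces the Gamma factors that assemble into the Mittag-Leffler series — together with the (harmless but necessary) point that the constant $L(Y)$ is locally bounded because $L$ is continuous on $M$, so on any compact set of initial data the estimates are uniform. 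Everything else is a routine completeness-and-continuity argument.
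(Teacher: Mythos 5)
Your proposal is correct and follows essentially the same route as the source of this result: the paper does not prove the theorem but imports it as Theorem 2.1.3 of \cite{Kobook19}, whose argument is precisely this Picard/Weissinger iteration in $C_Y([\tau,T],M)$, with the Beta-integral identity assembling the iterated singular kernels into the Mittag-Leffler series and the singular (Henry--Gronwall) inequality giving the stability estimates \eqref{eq3thabstractwelposfrac}--\eqref{eq4thabstractwelpos}. The one bookkeeping point to keep explicit --- which you implicitly use --- is that the recursion and the Gronwall step must be run on the running sup-norms $\|\cdot\|_{C([\tau,s],B)}$, which is legitimate because these are nondecreasing in $s$, so the pointwise hypothesis \eqref{eq1thabstractwelposfrac} upgrades to a bound on the sup over $[\tau,t]$.
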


\section{Abstract fractional McKean-Vlasov and HJB equations}
\label{abstrMVHJB}

For two Banach spaces $B,C$ we denote by $\LC(B,C)$ the Banach space of bounded linear operators
$B\to C$ with the usual operator norm denoted $\|.\|_{B\to C}$.

The sequences of embedded Banach spaces $B_2\subset B_1\subset B$
with the norms denoted $\|.\|_2, \|.\|_1, \|.\|$ respectively, will be
referred to as the {\it Banach triple} (of embedded spaces) or a {\it Banach tower of order} $3$, if the
norms are ordered, $\|.\|_2\ge  \|.\|_1\ge  \|.\|$, and
$B_2$ is dense in $B_1$ in the topology of $B_1$ while $B_1$ is dense in $B$ in the topology of $B$.
The following setting will play the key role in this paper.

Conditions (A):

(i)  Let $B_2\subset B_1\subset B$ be the Banach triple,
with the norms denoted $\|.\|_2, \|.\|_1, \|.\|$ respectively, and let
\[
D_i\in \LC(B_1,B)\cap \LC(B_2,B_1), \quad i=1,\cdots, n.
\]
Without loss of generality we assume that norms of all $D_j$ are bounded by $1$
in both $\LC(B_1,B)$ and $\LC(B_2,B_1)$ (which is usually the case in applications below).

(ii) Let $A\in \LC(B_2,B)$ and let $A$ generate a strongly continuous
semigroup $e^{At}$ in both $B$ and $B_1$, so that
\begin{equation}
\label{AgeninBB1}
\|e^{At}\|_{B\to B} \le Me^{mt}, \quad \|e^{At}\|_{B_1\to B_1} \le M_1 e^{t m_1},
\end{equation}
with some nonnegative constants $M,m, M_1,m_1$, and $B_2$ is an invariant core for this semigroup in $B$.

(iii) Let $B^{par}$ be another Banach space (of parameters) with the norm denoted $\|.\|_{par}$
and $H:\R\times B\times B^n\times B^{par}\to B$ be a continuous mapping, which is Lipschitz in the sense that
\[
\|H(t,b_0,b_1, \cdots, b_n,\al)-H(t,\tilde b_0, \tilde b_1,\cdots, \tilde b_n,\al)\|
\]
\begin{equation}
\label{HamLipsch}
\le L_H \sum_{j=0}^n \|b_j-\tilde b_j\|(1+L'_H\sum_{j=1}^n \|b_j\|),
\end{equation}
\begin{equation}
\label{HamLipschpar}
\|H(t,b_0,b_1, \cdots, b_n,\al)-H(t,b_0, b_1,\cdots, b_n, \tilde \al)\|
\le L_H^{par}\|\al-\tilde \al\|_{par} (1+\sum_{j=1}^n \|b_j\|),
\end{equation}
and is of linear growth
\begin{equation}
\label{HamLingrC}
\|H(t,b_0,b_1, \cdots, b_n,\al)\| \le L_H \sum_{j=1}^n \|b_j\|,
\end{equation}
with some constants $L_H, L'_H, L_H^{par}$.

\begin{remark} For the classical HJB equations estimate \eqref{HamLipsch}
 holds with $L'_H=0$, in which case \eqref{HamLingrC} follows from  \eqref{HamLipsch} and \eqref{HamLipschpar}
 (with $L_H$ that may depend on $\al$).
 However, for McKean-Vlasov-type equations, the linear growth
 of the Lipschitz constant in \eqref{HamLipsch} is not avoidable.
\end{remark}

An important assumption in our analysis is the following smoothing
property of the semigroup $e^{At}$: for $t>0$ it takes $B$ to $B_1$ and
\begin{equation}
\label{eqsmoothimngas}
\|e^{At}\|_{B\to B_1}\le \ka t^{-\om}, \quad t\in (0,1],
\end{equation}
with some constants $\ka>0$ and $\om \in (0,1)$.
Sometimes a similar condition for the pair $B_1,B_2$ is used:
\begin{equation}
\label{eqsmoothimngas2}
\|e^{At}\|_{B_1\to B_2}\le \ka_1 t^{-\om}, \quad t\in (0,1].
\end{equation}

\begin{remark}
For pseudo-differential operators $A$ (including the generators of Feller semigroups)
the deeper smoothing property \eqref{eqsmoothimngas2} can be derived
from \eqref{eqsmoothimngas} and the smoothness of the symbol of $A$, see Theorem 5.15.1 in \cite{Kobook19}.
\end{remark}

The so-called mild version of the Cauchy problem \eqref{eqabstrHJBMV} is the integral equation
\begin{equation}
\label{eqabstrHJBMVmil}
b(t)=e^{A(t-a)}Y+\int_a^t e^{A(t-s)} H(s,b(s),Db(s),\al) \, ds, \quad t\ge a.
\end{equation}
It is well known (see e.g. \cite{Kobook19}) and easy to see that if $b(t)$ solves equation \eqref{eqabstrHJBMV},
then it solves also equation \eqref{eqabstrHJBMVmil}, so that the uniqueness for \eqref{eqabstrHJBMVmil}
implies the uniqueness for \eqref{eqabstrHJBMV}.

The following theorem on the well-posedness of the equations \eqref{eqabstrHJBMVmil} is valid.
\begin{theorem}
\label{thabsrtHJBMVwell}
Let conditions (A)
and smoothing property \eqref{eqsmoothimngas} hold. Then
equation \eqref{eqabstrHJBMVmil} is well posed in $B_1$, that is, for any
$Y\in B_1$, $\al \in B^{par}$ there exists its unique global solution $b(t)=b(t;Y,\al)\in B_1$,
which depends Lipschitz continuously on the initial data $Y$ and the parameter $\al$.
In particular,
\begin{equation}
\label{eq00thabsrtHJBMVwell}
\sup_{t\in[a,T]}\|b(t;Y,\al)-b(t;\tilde Y,\tilde \al)\|_1
\le K\left(\|\al-\tilde \al\|_{par}(1+\|Y\|_1)+\|Y-\tilde Y\|_1\right),
\end{equation}
with constant $K$ depending on $t-a$ and all constants entering the assumptions of the theorem.
\end{theorem}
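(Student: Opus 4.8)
The plan is to apply the abstract fixed-point principle of Theorem~\ref{thabstractwelpos} to the integral operator on the right-hand side of \eqref{eqabstrHJBMVmil}, working in the space $B_1$ rather than $B$ so that the smoothing estimate \eqref{eqsmoothimngas} can compensate for the loss of a derivative caused by the gradient terms $Db(s)$. Concretely, for fixed $Y\in B_1$ and $\al\in B^{par}$ I would define
\[
[\Phi_{Y,\al}(b_\cdot)](t)=e^{A(t-a)}Y+\int_a^t e^{A(t-s)}H(s,b(s),Db(s),\al)\,ds,
\]
and check that $\Phi_{Y,\al}$ maps $C_Y([a,T],B_1)$ into itself: the first term is continuous into $B_1$ by the strong continuity of $e^{At}$ in $B_1$ from \eqref{AgeninBB1}; for the integral term, the integrand $H(s,b(s),Db(s),\al)$ lies only in $B$ (since $D_i\in\LC(B_1,B)$ and $H$ maps into $B$), but $e^{A(t-s)}$ applied to it lands in $B_1$ with the integrable singularity $\ka(t-s)^{-\om}$ from \eqref{eqsmoothimngas}, which makes the Bochner integral well-defined and continuous in $t$ with values in $B_1$, and it vanishes at $t=a$. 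One subtlety: \eqref{eqsmoothimngas} is stated only for $t\in(0,1]$, so for $t-a>1$ I would split $e^{A(t-s)}=e^{A(t-s-\eta)}e^{A\eta}$ for a small fixed $\eta$, absorbing the bounded factor $\|e^{A(t-s-\eta)}\|_{B_1\to B_1}\le M_1e^{m_1(t-s-\eta)}$, which only changes constants.

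Next I would verify the two inequalities \eqref{eq1thabstractwelposfrac} with $M=B_1$ as the ambient convex set (taking $M=B_1$, or a ball if a growth restriction were needed, but here linear growth suffices for a global statement). For the first, given $b^1,b^2\in C([a,T],B_1)$,
\[
\|[\Phi_{Y,\al}(b^1_\cdot)](t)-[\Phi_{Y,\al}(b^2_\cdot)](t)\|_1
\le\int_a^t \|e^{A(t-s)}\|_{B\to B_1}\,\|H(s,b^1(s),Db^1(s),\al)-H(s,b^2(s),Db^2(s),\al)\|\,ds,
\]
and the Lipschitz bound \eqref{HamLipsch} together with $\|D_i\|_{B_1\to B}\le 1$ gives
\[
\|H(s,b^1(s),Db^1(s),\al)-H(s,b^2(s),Db^2(s),\al)\|
\le L_H\bigl(\|b^1(s)-b^2(s)\|+\textstyle\sum_j\|D_j b^1(s)-D_j b^2(s)\|\bigr)\bigl(1+L'_H\textstyle\sum_j\|D_jb^1(s)\|\bigr),
\]
which is bounded by $L_H(n+1)\|b^1_\cdot-b^2_\cdot\|_{C([a,s],B_1)}(1+nL'_H\|b^1\|_{C([a,T],B_1)})$. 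Combined with \eqref{eqsmoothimngas}, this is exactly the Volterra-type estimate in the first line of \eqref{eq1thabstractwelposfrac} with $\om$ as given and $L(Y)$ a constant depending on $\ka$, $n$, $L_H$, $L'_H$ and on an a priori bound for $\|b\|_{C([a,T],B_1)}$. The second inequality of \eqref{eq1thabstractwelposfrac} follows similarly from \eqref{HamLipschpar} for the $\al$-dependence and from $\|e^{A(t-a)}(Y_1-Y_2)\|_1\le M_1e^{m_1(t-a)}\|Y_1-Y_2\|_1$ for the initial-data dependence, giving constants $\ka=M_1e^{m_1(T-a)}$ and $\ka_1$ proportional to $\kappa L_H^{par}$ times the same a priori bound.

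The one genuine obstacle is that the Lipschitz constant $L(Y)$ in the first estimate depends on an a priori bound on $\|b\|_{C([a,T],B_1)}$, so Theorem~\ref{thabstractwelpos} is not immediately applicable on an arbitrary convex set $M=B_1$; this is precisely the place where the linear growth assumption \eqref{HamLingrC} is needed. The plan here is first to derive an a priori bound: any solution $b$ of \eqref{eqabstrHJBMVmil} satisfies
\[
\|b(t)\|_1\le M_1e^{m_1(t-a)}\|Y\|_1+\ka\int_a^t(t-s)^{-\om}L_H(n+1)\|b(s)\|_1\,ds,
\]
using \eqref{HamLingrC} and $\|D_i\|_{B_1\to B}\le 1$, so the fractional Gronwall inequality (equivalently, estimate \eqref{eq3thabstractwelposfrac} applied in the linear case, or a direct Mittag-Leffler bound) yields $\|b(t)\|_1\le C(T-a,\|Y\|_1)$ with an explicit constant. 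One then runs the fixed-point argument on the convex set $M=\{b\in B_1:\|b\|_1\le C\}$ (intersected appropriately with the continuity requirement), on which $L(Y)$ is a fixed constant, obtaining existence and uniqueness of the fixed point $b(t;Y,\al)$; uniqueness among \emph{all} $B_1$-valued mild solutions then follows because any such solution automatically obeys the same a priori bound and hence lives in $M$. Finally, the Lipschitz dependence \eqref{eq00thabsrtHJBMVwell} is read off from \eqref{eq4thabstractwelposfrac} of Theorem~\ref{thabstractwelpos}, with $K=\max(\ka,\ka_1)\,E_{1-\om}(L(Y_j)\Ga(1-\om)(T-a)^{1-\om})$ and the factor $(1+\|Y\|_1)$ coming from the $\al$-part of \eqref{HamLipschpar}. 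The equivalence of mild solutions of \eqref{eqabstrHJBMVmil} with those of \eqref{eqabstrHJBMV} was already noted before the theorem, so no separate argument is required there.
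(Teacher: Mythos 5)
Your proposal is correct and follows essentially the same route as the paper: the mild equation is treated as a fixed-point problem for $\Phi_{Y,\al}$ in $C_Y([a,T],B_1)$, the smoothing estimate \eqref{eqsmoothimngas} yields the Volterra-type bound required by Theorem~\ref{thabstractwelpos}, the case $L'_H\neq 0$ is handled by an a priori $B_1$-bound obtained from the linear growth condition \eqref{HamLingrC}, and the Lipschitz dependence \eqref{eq00thabsrtHJBMVwell} is read off from \eqref{eq4thabstractwelposfrac} together with \eqref{HamLipschpar}. Your explicit semigroup splitting for $t-a>1$ and the Gronwall-based a priori bound on solutions are just slightly more detailed renderings of the steps the paper compresses into the factor $(T-a)^{\om}M_1e^{m_1(T-a)}$ and the remark on uniform boundedness of the iterations.
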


\begin{proof}
Solutions to \eqref{eqabstrHJBMVmil} are fixed points of the  mapping
\begin{equation}
\label{eq0thabsrtHJBMVwell}
[\Phi_{Y,\al}(b(.))](t)=e^{A(t-a)}Y+\int_a^t e^{A(t-s)} H(s,b(s),Db(s),\al) \, ds
\end{equation}
acting in $C_Y([a,T],B_1)$ for any $T>a$. The fact that it takes this space to itself follows directly
from the assumptions of the Theorem.

Assume first that $L'_H=0$ in \eqref{HamLipsch}.
Then it follows that
\begin{equation}
\label{eq1thabsrtHJBMVwell}
\|[\Phi_{Y_1,\al}(b(.))](t) -[\Phi_{Y_2,\al}(b(.))](t)\|_1
\le  M_1e^{m_1t} \|Y_1-Y_2\|_1,
\end{equation}
and
\[
\|[\Phi_{Y,\al}(b^1(.))](t) -[\Phi_{Y,\al}(b^2(.))](t)\|_1
\]
\[
\le  \ka L_H (T-a)^{\om}M_1 e^{m_1(T-a)}\int_a^t (t-s)^{-\om} \left(\|b^1(s)-b^2(s)\|
+\sum_{j=1}^n \|D_jb^1(s)-D_jb^2(s)\|\right) \, ds
\]
\begin{equation}
\label{eq2thabsrtHJBMVwell}
\le  \ka L_H (n+1)(T-a)^{\om}M_1 e^{m_1(T-a)}\int_a^t (t-s)^{-\om} \|b^1(s)-b^2(s)\|_1 \, ds.
\end{equation}
Note that the coefficient $(T-a)^{\om}M_1 e^{m_1(T-a)}$ appears here because
smoothing \eqref{eqsmoothimngas} is assumed only for $t<1$.
Due to these inequalities, the well-posedness for any $\al$ follows
from Theorem \ref{thabstractwelpos}. Moreover, from \eqref{HamLipschpar}, we find that
 \[
\|[\Phi_{Y,\al}(b(.))](t) -[\Phi_{Y,\tilde \al}(b(.))](t)\|_1
\le \tilde K\|\al-\tilde \al\|_{par} (1+\|b(.)\|_{C([\tau,T],B_1)}),
\]
and therefore, if $b$ is a fixed point of $\Phi$, its growth is bounded
by \eqref{eq3thabstractwelpos},  and thus
 \[
\|[\Phi_{Y,\al}(b(.))](t) -[\Phi_{Y,\tilde \al}(b(.))](t)\|_1
\le K\|\al-\tilde \al\|_{par} (1+\|Y\|_1),
\]
with some constants $\tilde K, K$ depending on the constants entering the assumptions of the theorem.
Hence Theorem \ref{thabstractwelpos} (with $\ka_1$ from \eqref{eq1thabstractwelposfrac} depending on $\|Y\|_1$)
again applies to give \eqref{eq00thabsrtHJBMVwell}.

If $L'_H\neq 0$ in  \eqref{HamLipsch}, an additional preliminary step is needed in the proof.
Namely, one has to show that all iterations are uniformly bounded in $B_1$. This follows from the
assumptions of linear growth \eqref{HamLingrC} and then applies the previous argument. 
\end{proof}

\begin{remark} 
Of course,  Theorem \ref{thabstractwelpos} supplies also explicit estimates
for the constant $K$ and the growth of solutions in time.
\end{remark}

To treat equation \eqref{eqabstrHJBMVf} we recall that its mild form is the integral equation
\begin{equation}
\label{eqabstrHJBMVmilfr}
b(t)=E_{\be}(A(t-a)^{\be})Y+\be \int_a^t (t-s)^{\be-1} E_{\be}'(A(t-s)^{\be}) H(s,b(s),Db(s),\al) \, ds,
\end{equation}
where $E_{\be}(A)$ is defined by \eqref{eqMitLefZol}. 
Thus more explicitly this equation writes down as
\[
b(t)=\frac{1}{\be} \int_0^{\infty} e^{A(t-a)^{\be} x} Y x^{-1-1/\be} G_{\be} (1, x^{-1/\be}) \, dx
\]
\begin{equation}
\label{eqabstrHJBMVmilfr1}
+\int_a^t (t-s)^{\be-1}  \int_0^{\infty} e^{A(t-s)^{\be} x} x^{-1/\be} G_{\be} (1, x^{-1/\be}) \, dx  H(s,b(s),Db(s),\al) \, ds.
\end{equation}
Again one proves (see Theorem 8.2.1 of \cite{Kobook19}) that
any solution of problem \eqref{eqabstrHJBMVf} solves \eqref{eqabstrHJBMVmilfr}.

\begin{theorem}
\label{thabsrtHJBMVwellfr}
Let conditions (A)
and smoothing \eqref{eqsmoothimngas} hold. Then
equation \eqref{eqabstrHJBMVmilfr} is well posed in $B_1$, that is, for any
$Y\in B_1$, $\al \in B^{par}$ there exists its unique global solution $b(t)=b(t;Y,\al)\in B_1$,
which depends Lipschitz continuously on the initial data $Y$ and parameter $\al$
so that \eqref{eq00thabsrtHJBMVwell} holds.
\end{theorem}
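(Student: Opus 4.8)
The plan is to follow the proof of Theorem~\ref{thabsrtHJBMVwell}, reducing the well-posedness of \eqref{eqabstrHJBMVmilfr} to an application of Theorem~\ref{thabstractwelpos}. As there, I would regard mild solutions as the fixed points of
\[
[\Phi_{Y,\al}(b(.))](t)=E_{\be}(A(t-a)^{\be})Y+\be \int_a^t (t-s)^{\be-1} E_{\be}'(A(t-s)^{\be}) H(s,b(s),Db(s),\al) \, ds
\]
on $C_Y([a,T],B_1)$, where $E_{\be}(A\tau^{\be})$ and $E_{\be}'(A\tau^{\be})$ are defined by the Zolotarev formulae \eqref{eqMitLefZol}, \eqref{eqMitLefZol1} with $e^{A\tau^{\be}x}$ in place of $e^{sx}$.

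The first and main step is to produce the operator estimates that replace \eqref{AgeninBB1} and \eqref{eqsmoothimngas} in the fractional setting. From \eqref{eqMitLefZol} and the $B_1$-bound in \eqref{AgeninBB1} one gets $\|E_{\be}(A\tau^{\be})\|_{B_1\to B_1}\le C_0$ uniformly for $\tau$ in bounded intervals: the integral converges because $G_{\be}(1,x^{-1/\be})$ decays super-exponentially as $x\to\infty$, which absorbs the factor $e^{m_1\tau^{\be}x}$, while its behaviour near $x=0$ is controlled by the Mellin identity \eqref{eqMitLefMel}. More importantly, inserting the smoothing bound $\|e^{A\sigma}\|_{B\to B_1}\le\ka\sigma^{-\om}$ into \eqref{eqMitLefZol1} (splitting the integral at $\tau^{\be}x=1$ and using the plain semigroup bound on the tail), and then invoking \eqref{eqMitLefMel} with $\om$ replaced by $\om+1/\be$ — which is admissible precisely because $\om<1$ — one obtains
\[
(t-s)^{\be-1}\|E_{\be}'(A(t-s)^{\be})\|_{B\to B_1}\le C_1\,(t-s)^{-\om'},\qquad \om':=1-\be(1-\om)\in(0,1).
\]
Thus the smoothing of the semigroup transfers, through the Zolotarev representation, into an integrable power singularity of order strictly below $1$, which is exactly what Theorem~\ref{thabstractwelpos} requires. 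I expect this step to be the main obstacle; the rest is bookkeeping.

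With these bounds in hand, and assuming first $L'_H=0$ in \eqref{HamLipsch}, I would check that $\Phi_{Y,\al}$ maps $C_Y([a,T],B_1)$ into itself (using \eqref{HamLingrC} and $\|D_j\|_{B_1\to B}\le1$ to see that the $B$-valued integrand carries only the integrable singularity $(t-s)^{-\om'}$), and then verify the hypotheses \eqref{eq1thabstractwelposfrac}. The first inequality of \eqref{eq1thabstractwelposfrac}, with $\om'$ in place of $\om$, follows from the displayed kernel bound, the Lipschitz estimate \eqref{HamLipsch} and $\|D_j\|_{B_1\to B}\le1$, with abstract Lipschitz constant $L(Y)=\be(n+1)C_1L_H$; the $Y$-part of the second inequality follows from the $B_1$-boundedness of $E_{\be}(A(t-a)^{\be})$, and its $\al$-part from \eqref{HamLipschpar} combined with \eqref{eq3thabstractwelposfrac} to bound the size of the fixed point, exactly as in Theorem~\ref{thabsrtHJBMVwell}. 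Theorem~\ref{thabstractwelpos} then yields the unique global mild solution $b(t;Y,\al)\in B_1$, and \eqref{eq4thabstractwelposfrac} gives \eqref{eq00thabsrtHJBMVwell}. Finally, the case $L'_H\neq0$ is reduced to the previous one by the same preliminary step as in Theorem~\ref{thabsrtHJBMVwell}: using the linear growth \eqref{HamLingrC} together with the kernel bound of the second step and the generalized Gronwall inequality behind \eqref{eq3thabstractwelposfrac}, one first shows that all Picard iterates stay in a fixed ball of $B_1$, on which $H$ is Lipschitz with a constant independent of the iterate, and the argument above applies.
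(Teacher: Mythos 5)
Your proposal is correct and matches the paper's own argument essentially step for step: the same fixed-point map via the Zolotarev representation, the same splitting of the double integral at $(t-s)^{\be}x=1$ with the smoothing bound on one piece and the semigroup growth bound on the other, the same use of the Mellin identity \eqref{eqMitLefMel} (shifted by $1/\be$) to produce the integrable singularity $(t-s)^{-[1-\be(1-\om)]}$, and the same appeal to Theorem \ref{thabstractwelpos} together with the reduction of the case $L'_H\neq 0$ to uniformly bounded iterates. No substantive differences to report.
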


\begin{proof}
For simplicity, let us discuss again only the case with $L'_H= 0$ in  \eqref{HamLipsch}.
Solutions to \eqref{eqabstrHJBMVmil} are fixed points of the  mapping	
\[
[\Phi_{Y,\al}(b(.))](t)=E_{\be}(A(t-a)^{\be})Y+\be \int_a^t (t-s)^{\be-1} E_{\be}'(A(t-s)^{\be}) H(s,b(s),Db(s),\al) \, ds
\]
acting in $C_Y([a,T],B_1)$ for any $T>a$.
We have
\begin{equation}
\label{eq1thabsrtHJBMVwellfr}
\|[\Phi_{Y_1,\al}(b(.))](t) -[\Phi_{Y_2,\al}(b(.))](t)\|_1
\le  M_1 E_{\be}(m_1(t-a)^{\be}) \|Y_1-Y_2\|_1,
\end{equation}
which follows from \eqref{AgeninBB1} and \eqref{eqMitLefZol}.
Next,
\[
[\Phi_{Y,\al}(b^1(.))](t) -[\Phi_{Y,\al}(b^2(.))](t)
=\int_a^t (t-s)^{\be-1} ds
\]
\[
\times \int_0^{\infty} e^{A(t-s)^{\be} x} x^{-1/\be} G_{\be} (1, x^{-1/\be})dx
[H(s,b^1(s),Db^1(s),\al)-H(s,b^2(s),Db^2(s),\al)] \, ds.
\]
Decomposing the double integral in two parts over the sets $(t-a)^{\be} x\le 1$
and its complement, we use estimate \eqref{eqsmoothimngas} in the first integral and the second estimate from \eqref{AgeninBB1}
in the second integral yielding
\[
\|[\Phi_{Y,\al}(b^1(.))](t) -[\Phi_{Y,\al}(b^2(.))](t)\|_1
\]
\[
\le  \ka L_H (n+1) \int_a^t (t-s)^{\be-1} (t-s)^{-\om \be} \int_0^{\infty} x^{-\om -1/\be} G_{\be} (1, x^{-1/\be}) \, dx
\|b^1(s)-b^2(s)\|_1 \, ds
\]
\[
+ \ka L_H M_1(n+1) \int_a^t (t-s)^{\be-1} \int_0^{\infty} e^{m_1(t-s)^{\be} x}  x^{-1/\be} G_{\be} (1, x^{-1/\be}) \, dx
\|b^1(s)-b^2(s)\|_1 \, ds.
 \]
Using \eqref{eqMitLefMel} to estimate the first term and  \eqref{eqMitLefZol1}
 to estimate the second term  yields
\[
\|[\Phi_{Y,\al}(b^1(.))](t) -[\Phi_{Y,\al}(b^2(.))](t)\|_1
\]
\begin{equation}
\label{eq2thabsrtHJBMVwellfr}
\le   L_H (n+1)(\ka+M_1) C(m_1,\be,T-a) \int_a^t (t-s)^{-[1-\be(1-\om)]} \|b^1(s)-b^2(s)\|_1 \, ds,
\end{equation}
with a constant $C$ depending on $m_1,\be, T-a$.
Due to these inequalities, and similar inequality expressing the Lipschitz dependence of $\Phi$ on $\al$,
 the claim follows again from Theorem \ref{thabstractwelpos}.
\end{proof}

\begin{remark} Notice that $E_1(x)=e^x$ and thus $E'_1(x)=e^x$. Hence the fractional mild equation \eqref{eqabstrHJBMVmilfr}
turns to the classical mild equation \eqref{eqabstrHJBMVmil} as $\be\to 1$. Hence the discussion of the case of classical
differential equations can be considered as included in the fractional theory. In later sections, we shall sometimes talk
about just fractional equations having in mind that the classical case is recovered automatically by setting $\be=1$.
\end{remark}

\begin{remark}
\label{remonback}
All the results above have their straightforward counterparts for the backward problems
under the same exact conditions. The only difference is that, instead of the sets $C_Y([\tau,t],B)$
of functions with a fixed left point, in the backward setting one is working with the sets
$C^b_Z([\tau,T],B)$ of functions with a fixed right point: $f(T)=Z$.
\end{remark}

Let us comment on how the results above are applied to HJB and McKean-Vlasov equations
\eqref{eqHJBfr} and \eqref{eqMVfr}.
For these cases, $B$ is a space of functions on $\R^d$,
$A$ is the generator of a Feller process in $R^d$
and $D$ is the gradient (derivative) operator.
Specifically, for HJB equation the natural Banach triple is
$C^2_{\infty}(\R^d)\subset C^1_{\infty}(\R^d) \subset C_{\infty}(\R^d)$,
and for the McKean-Vlasov equations a possible triple is
$W_2(\R^d)\subset W_1(\R^d)\subset L_1(\R^d)$.
Here $C_{\infty}(\R^d)$ denotes the space of continuous functions on $\R^d$ tending to zero at infinity,
$C^j_{\infty}(\R^d)$ their subsets of functions having derivatives of order up to $j$ in $C_{\infty}(\R^d)$,
and $W_j(R^d)$ denote the Sobolev spaces of functions with partial derivatives (understood in the sense of
generalized functions) of order up to $j$ in $L_1(\R^d)$,
equipped with the integral (Sobolev) norms
\[
\|f\|_{L_1(R^d)}=\int |f(x)| \, dx, \quad
\|f\|_{W_1(R^d)}=\|f\|_{L_1(R^d)}+\sum_{j=1}^d \int \left|\frac{\pa f}{\pa x_j}\right| dx,
\]
\[
\|f\|_{W_2(R^d)}=\|f\|_{W_1(R^d)}+\sum_{i\le j} \int \left|\frac{\pa^2 f}{\pa x_j\pa x_i}\right|dx.
\]

The proof of the smoothing properties of $A$ is usually based on the properties of the Green functions
of operator $A$ (transition probabilities of the processes generated by $A$). For instance,
\eqref{eqsmoothimngas} and \eqref{eqsmoothimngas2} are known to hold with $\om=1/2$ for non-degenerate
diffusion operators $A$ with smooth enough coefficients.
When $A=-|\De|^{\al}$ with $\al\in (1,2)$, or more  generally $A=-a(x)|\De|^{\al}$ with a smooth
positive function $a$ bounded from above and below, or even more generally when $A$ is a pseudo-differential
operator that generates a non-degenerate
stable-like process with a symmetric spectral measure, that is
\[
A=\int_{S^{d-1}} |(\nabla, s)|^{\al} \mu(s) \, ds,
\]
with a positive smooth function $\mu$ on $(d-1)$-dimensional sphere $S^{d-1}$, bounded from above and below,
the semigroup generated by $A$ satisfies \eqref{eqsmoothimngas}, \eqref{eqsmoothimngas2} and \eqref{AgeninBB1},
with $\om=1/\al$, as shown in \cite{Ko09} (see also books \cite{Ko11} and \cite{Kobook19}).
 Similar properties are also being held for various mixtures of diffusions and stable processes perturbed
 by pure jump processes. As shown in Theorem \ref{thAindualtriples} below,
  from the properties of $A$ one can derive analogous properties for $A^*$
by duality arguments. Thus for these $A$ the required assumptions on $A$ and $A^*$ from
\eqref{eqHJBfr} and \eqref{eqMVfr} hold. We also refer to Section
5.15 of book \cite{Kobook19}, where it is shown, how one can derive \eqref{eqsmoothimngas2} from
a more simple estimate \eqref{eqsmoothimngas} and additional smoothness. For Hamiltonian functions
\eqref{eqHJBfrHam} assumptions (B) are often difficult to check, and therefore the assumptions (C) were introduced.
The theory of fractional HJB equations was initially developed in \cite{KoVe14a}, with more detail given
in  \cite{Kobook19}.

\begin{remark}
From an abstract point of view the spaces $C_{\infty}(\R^d)$ and $L_1(\R^d)$ are the basic examples of
abstract $AL$ and $AM$ spaces (see \cite{SchLatt}).
\end{remark}

\section{Anticipating equations}
\label{anticipMVHJB}

Anticipating versions of equations \eqref{eqabstrHJBMV} and \eqref{eqabstrHJBMVf} can be stated as the problems
\begin{equation}
\label{eqabstrHJBMVan}
\dot b(t)=Ab(t)+H(t,b(t),Db(t),u(b(.))), \quad b(a)=Y, \quad t\in [a,T],
\end{equation}
\begin{equation}
\label{eqabstrHJBMVfan}
D^{\be}_{a+*} b(t)=Ab(t)+H(t,b(t),Db(t), u(t, b(.))), \quad b(a)=Y \quad t\ge a,
\end{equation}
where $A$, $D_1, \cdots, D_n$, $H$ are as in \eqref{eqabstrHJBMV} and \eqref{eqabstrHJBMVf},
and $u$ is a continuous mapping $\R \times C([a,T],B_1) \to B^{par}$, which is Lipschitz in the second argument, so that
\begin{equation}
\label{eqabstrHJBMVancond}
\|u(t, b(.))-u(t, \tilde b(.))\|_{par}\le L_u\|b(.)-\tilde b(.)\|_{C([a,T],B_1)}.
\end{equation}
Usually, in applications $u(t,.)$ depends actually only on the future $b(s): s\in [t,T]$,
but this additional condition does not simplify analysis.

For these problems uniqueness usually does not hold globally (that is for large $T$), only
existence can be proved under rather general assumptions  (see e.g. \cite{KolWei} for equations of type \eqref{eqabstrHJBMVan}).
We shall prove here the local well-posedness, that is, for sufficiently small $T-a$.
Of  course, we again work with mild solutions, relying on the fact that any solution of  \eqref{eqabstrHJBMVan}
is a fixed point of the mapping
\begin{equation}
\label{eqfixmapan}
[\Phi_Y(b(.))](t)=e^{A(t-a)}Y+\int_a^t e^{A(t-s)} H(s,b(s),Db(s),u(s,b(.))) \, ds,
\end{equation}
and any solution of  \eqref{eqabstrHJBMVfan} is a fixed point of the mapping
\begin{equation}
\label{eqfixmapanfr}
[\Phi_Y(b(.))](t)=E_{\be}(A(t-a)^{\be})Y+\be \int_a^t (t-s)^{\be-1} E_{\be}'(A(t-s)^{\be}) H(s,b(s),Db(s),u(s,b(.))) \, ds.
\end{equation}

Due to the anticipating dependence of $u$ on $b$, Theorem \ref{thabstractwelpos} is not applicable.
We shall use just the standard Banach contraction principle. 

\begin{theorem}
\label{thabsrtHJBMVan}
Let conditions (A),
smoothing property \eqref{eqsmoothimngas} and Lipschitz estimate \eqref{eqabstrHJBMVancond} hold.
Then, for any $r>0$, there exists $T_0$ such that equations
\eqref{eqabstrHJBMVan} and \eqref{eqabstrHJBMVfan} have unique mild solutions $b(t)=b(t;Y)$
(that is, they are fixed points of \eqref{eqfixmapan} and \eqref{eqfixmapanfr} respectively)
for all $T<T_0$ and $Y$ such that $\|Y\|\le r$, and this solution depends Lipschitz continuously
 on $Y$ inside the ball $\|Y\|\le r$.
\end{theorem}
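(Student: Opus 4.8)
The plan is to prove local well-posedness by setting up the Banach fixed-point principle on a suitably chosen closed ball inside $C_Y([a,T],B_1)$ and showing that, for $T-a$ small enough, the map $\Phi_Y$ from \eqref{eqfixmapan} (respectively \eqref{eqfixmapanfr}) is a self-map of this ball and is a strict contraction on it. Because the anticipating dependence of $u$ on the whole trajectory $b(\cdot)$ destroys the ``causal'' (restriction-compatible) structure required by Theorem \ref{thabstractwelpos}, one cannot appeal to that theorem; instead one simply uses the standard contraction mapping theorem on the complete metric space $C_Y([a,T],B_1)$, so all estimates must be uniform-in-time estimates (via the sup-norm on $[a,T]$) rather than the Gronwall-type accumulated estimates used earlier.

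First I would fix $r>0$ and choose the radius of the ball, say $B_R=\{b(\cdot)\in C_Y([a,T],B_1): \|b(\cdot)\|_{C([a,T],B_1)}\le R\}$, with $R$ depending on $r$ (e.g. $R=2M_1 r+1$ or the like), using conditions (A) to control $\|e^{A(t-a)}Y\|_1$ (or $\|E_\be(A(t-a)^\be)Y\|_1$ via \eqref{eqMitLefZol}) by a constant times $\|Y\|_1\le r$ on the bounded interval. Then I would estimate the nonlinear integral term: by the smoothing property \eqref{eqsmoothimngas}, the linear growth \eqref{HamLingrC}, and the bound $\|D_j\|\le 1$, the integral contributes at most a constant times $R\cdot(T-a)^{1-\om}$ in the classical case, and at most a constant times $R\cdot(T-a)^{\be(1-\om)}$ in the fractional case (the exponent coming exactly as in \eqref{eq2thabsrtHJBMVwellfr}, using \eqref{eqMitLefMel} and \eqref{eqMitLefZol1} to handle the two pieces of the double integral). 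Choosing $T-a$ small enough (depending only on $r$ and the constants in the hypotheses) makes $\Phi_Y(B_R)\subseteq B_R$. For the contraction estimate, one subtracts $[\Phi_Y(b^1)](t)-[\Phi_Y(b^2)](t)$, uses \eqref{HamLipsch} (with $L'_H=0$, or after a preliminary uniform-boundedness step as in Theorem \ref{thabsrtHJBMVwell} if $L'_H\neq 0$) together with \eqref{eqabstrHJBMVancond} to bound
\[
\|H(s,b^1(s),Db^1(s),u(s,b^1(\cdot)))-H(s,b^2(s),Db^2(s),u(s,b^2(\cdot)))\|
\le C\,\|b^1(\cdot)-b^2(\cdot)\|_{C([a,T],B_1)},
\]
where the key point is that the $u$-term produces, via \eqref{HamLipschpar} and \eqref{eqabstrHJBMVancond}, a bound by the \emph{full} sup-norm $\|b^1(\cdot)-b^2(\cdot)\|_{C([a,T],B_1)}$ rather than by $\|b^1-b^2\|_{C([a,s],B_1)}$ — this is precisely the non-causal feature. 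Integrating the kernel bound ($\int_a^t(t-s)^{-\om}ds$ in the classical case, $\int_a^t(t-s)^{-[1-\be(1-\om)]}ds$ in the fractional case) then gives
\[
\|\Phi_Y(b^1)-\Phi_Y(b^2)\|_{C([a,T],B_1)}\le \theta(T-a)\,\|b^1(\cdot)-b^2(\cdot)\|_{C([a,T],B_1)},
\]
with $\theta(T-a)\to 0$ as $T-a\to 0$; shrinking $T_0$ so that $\theta<1$ on $[a,T]$ for all $T<T_0$ yields the unique fixed point in $B_R$.

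Finally, for Lipschitz dependence on $Y$ inside the ball $\|Y\|\le r$: writing $b^i=b(\cdot;Y_i)$ for the two fixed points, subtract the fixed-point identities, estimate $\|e^{A(t-a)}(Y_1-Y_2)\|_1\le M_1 e^{m_1(T-a)}\|Y_1-Y_2\|_1$ (resp. via \eqref{eqMitLefZol}), bound the difference of the integral terms exactly as in the contraction step by $\theta(T-a)\|b^1(\cdot)-b^2(\cdot)\|_{C([a,T],B_1)}$, and absorb: since $\theta<1$, one gets $\|b^1(\cdot)-b^2(\cdot)\|_{C([a,T],B_1)}\le (1-\theta)^{-1}M_1 e^{m_1(T-a)}\|Y_1-Y_2\|_1$. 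The main obstacle is bookkeeping the fractional kernel and the double integral so that the resulting power of $(T-a)$ is genuinely positive — i.e. verifying $1-[1-\be(1-\om)]=\be(1-\om)>0$ and that the Mellin integral \eqref{eqMitLefMel} converges at $\om+1/\be<1+1/\be$ — and, if $L'_H\neq 0$, inserting the preliminary step that bounds all Picard iterates uniformly in $B_1$ on the short interval before the Lipschitz/contraction estimates can be applied; these are exactly the same technical points already handled in the proofs of Theorems \ref{thabsrtHJBMVwell} and \ref{thabsrtHJBMVwellfr}, now carried out with sup-norms over the fixed interval $[a,T]$.
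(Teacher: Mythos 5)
Your proposal is correct and follows essentially the same route as the paper: the paper likewise abandons Theorem \ref{thabstractwelpos} because of the anticipating dependence, first uses the linear growth condition \eqref{HamLingrC} to show $\Phi_Y$ maps a ball $C_Y([a,T],B_1^R)$ (with $R>M_1e^{m_1T}r/(1-\ep)$) into itself, and then establishes a contraction estimate with a factor vanishing as $T-a\to 0$, treating the fractional case as identical via the Zolotarev representation. Your additional bookkeeping of the exponents $(T-a)^{1-\om}$ and $(T-a)^{\be(1-\om)}$ and the explicit absorption argument for Lipschitz dependence on $Y$ only make explicit what the paper leaves implicit.
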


\begin{proof} Due to formula \eqref{eqMitLefZol} expressing Mittag-Leffler functions in terms of exponents,
the proof is essentially the same for both equations. So let us discuss only equation \eqref{eqabstrHJBMVan}.
There are two steps. Firstly, due to the linear growth condition \eqref{HamLingrC}, one derives for
$\|Y\|_1\le r$  the estimate
\[
\|[\Phi_Y(b(.))](t)\|_1 \le M_1e^{m_1T}r + \ep \|b(.)\|_{C_Y([a,T],B_1)},
\]
where $\ep$ depends on $T_0$, so that one can choose $T_0$ in such a way that $\ep <1$.
In this case if $\sup_t\|b(.)\|_1\le R$, then
\[
\|[\Phi_Y(b(.))](t)\|_1 \le  M_1e^{m_1T}r+\ep R \le R,
\]
whenever
\[
R>\frac{M_1e^{m_1T}r}{1-\ep}.
\]
Thus for any such $R$, $\Phi_Y$ takes $C_Y([a,T],B_1^R)$
to itself, where $B_1^R$ is the ball in $B_1$ of radius $R$ centred at $0$.
Next, from the estimate
\[
\|[\Phi_Y(b^1(.))](t)-\Phi_Y(b^2(.))(t)\|_1
\le \|b^1(.)-b^2(.)\|_{C_Y([a,T],B_1)} (T-a) e^{m_1(T-a)} [k_1+k_2 (\|Y\|_1+R)],
\]
where $k_1, k_2$ do not depend on $R$ and $Y$,
it follows that $\Phi_Y$ is a contraction in the complete metric spaces
$C_Y([a,T],B_1^R)$ for sufficiently small $T-a$.
\end{proof}

\section{Forward-backward systems}
\label{forwbakwsystem}

The well-posedness result of this section is an abstract version of the results obtained in
\cite{KolWei} (slightly extended in Chapter 6 of book \cite{Kobook19}). This abstract presentation
not only simplifies the exposition, but it is specifically designed for a more or less straightforward extension
to the fractional case, as given in the next section, and which is our main concern here.

Paper \cite{KolWei} contains also a global existence result (without uniqueness) that can be recast
in the present abstract setting, but we are not giving detail here.

To introduce our coupled forward backward system we need to introduce conditions (A) both
for the forward and the backward parts of the system plus the coupling mechanism (interpreted as control in applications)
including an appropriate setting of the parameter space to support this mechanism.
That is, the forward-backward version of conditions (A) reads as follows.

Conditions (AFB):

(i) Let $B_2\subset B_1\subset B$ and $B_{2b}\subset B_{1b}\subset B_b$ be two Banach triples
(index 'b' arising from 'backward'), with the norms in the  second triple denoted
$\|.\|_{2b}$, $\|.\|_{1b}$ and $\|.\|_b$.
Let $D=(D_1, \cdots, D_n)$ and $D^b=(D_{1b}, \cdots, D_{nb})$ be operators satisfying conditions (A) (i)
for the triples $B_2\subset B_1\subset B$ and $B_{2b}\subset B_{1b}\subset B_b$, respectively.

(ii) Let $A$ and $A^b$ be operators with properties described in condition (A)(ii) for
the triples $B_2\subset B_1\subset B$ and $B_{2b}\subset B_{1b}\subset B_b$, respectively.
The corresponding constants governing the growth of semigroups for the second triple will be denoted
 $M_b,m_b, M_{1b},m_{1b}$.

(iii) Let $B^{con}$ be another Banach space (of control functions) with the norm
denoted $\|.\|_{con}$. Let $H:\R\times B\times B^n\times B^{con}\to B$ be a continuous
mapping, which is Lipschitz in the sense that
\begin{equation}
\label{HamLipschrep}
\|H(t,b_0,b_1, \cdots, b_n,u)-H(t,\tilde b_0, \tilde b_1,\cdots, \tilde b_n,u)\|
\le L_H \sum_{j=0}^n \|b_j-\tilde b_j\|(1+L'_H\sum_{j=1}^n \|b_j\|),
\end{equation}
\begin{equation}
\label{HamLipschparrep}
\|H(t,b_0,b_1, \cdots, b_n,u)-H(t,b_0, b_1,\cdots, b_n, \tilde u)\|
\le L_H^{par}\|u-\tilde u\|_{con} (1+\sum_{j=1}^n \|b_j\|).
\end{equation}
Let $H^b:\R\times B_b\times (B_b)^n\times C([a,T],B)\to B_b$ be a continuous mapping,
which is Lipschitz in the sense that
\begin{equation}
\label{HamLipschrepb}
\|H^b(t,f_0,f_1, \cdots, f_n,b(.))-H^b(t,\tilde f_0, \tilde f_1,\cdots, \tilde f_n,b(.))\|_b
\le L^b_H \sum_{j=0}^n \|f_j-\tilde f_j\|_b (1+(L_H^b)'\sum_{j=1}^n \|b_j\|_b),
\end{equation}
\begin{equation}
\label{HamLipschparrepb}
\|H^b(t,f_0,f_1, \cdots, f_n,b(.))-H^b(t,f_0, f_1,\cdots, f_n, \tilde b(.))\|
\le L_H^b\|b-\tilde b\|_{C([a,T],B)} (1+\sum_{j=0}^n \|f_j\|_b).
\end{equation}
Finally let both $H$ and $H^b$ be of linear growth (in the sense of equation \eqref{HamLingrC}).

(iv) Let $u: \R\times B_b \times (B_b)^n \to B^{con}$ be a continuous function,
which is Lipschitz continuous in the sense that
\begin{equation}
\label{eqcontLipcon}
\|u(t,f_0,f_1, \cdots, f_n)-u(t,\tilde f_0, \tilde f_1,\cdots, \tilde f_n)\|
\le L_u \sum_{j=0}^n \|f_j-\tilde f_j\|_b,
\end{equation}
with a constant $L_u$.

The forward-backward system we are analysing here is of the form
\begin{equation}
\label{eqabstrforback}
\left\{
\begin{aligned}
& \dot b(t)=Ab(t)+H(t,b(t),Db(t),u(t,f(t),Df(t))), \quad b(a)=Y, \quad t\in [a,T], \\
& \dot f(t)=-A^bf(t)+H^b(t,f(t),D^bf(t), b_{\ge t}), \quad f(T)=Z, \quad t\in [a,T].
\end{aligned}
\right.
\end{equation}

The notation $b_{\ge t}$ indicates the assumption that $H(t,.)$ depends only on the future
 values $\{b(s), s\in [t,T]\}$ of curves $b(.)\in C([a,T],B)$.

Our approach for the analysis of system \eqref{eqabstrforback} is based on its reduction to
a single anticipating equation of type \eqref{eqabstrHJBMVan}.
Namely, by Theorem \ref{thabsrtHJBMVwell} (and Remark \ref{remonback})
for a given  curve $b(.)\in C_Y([a,T],B)$ and $Z\in B_{1b}$ there exists a unique
mild solution $f(t)=f(t; Z, b(.))\in C([a,T],B_{1b})$ of the second equation of system  \eqref{eqabstrforback}.
Substituting this solution into the first equation of  system  \eqref{eqabstrforback} we get the equation
\begin{equation}
\label{eqabstrforbackred}
\dot b(t)=Ab(t)+H(t,b(t),Db(t),u(t,f(t;Z,b(.)),Df(t;Z,b(.)))), \quad t\in [a,T].
\end{equation}
This equation is of type \eqref{eqabstrHJBMVan}.

\begin{theorem}
\label{thabsrforback}
Let conditions (AFB)
and smoothing property \eqref{eqsmoothimngas} for $A$ and $A^b$ hold. Then, for any
$Y\in B_1$, $Z\in B_{1b}$ there exists $T_0$ such that the forward-backward system
\eqref{eqabstrforback} has unique mild solution $(b(t),f(t))$
for all $T<T_0$, which depends locally Lipschitz on $Y$, $Z$, that is, when are $Y$ and $Z$
are taken from bounded sets.
\end{theorem}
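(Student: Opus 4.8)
The plan is to follow the reduction already announced in the text: pass the forward-backward system \eqref{eqabstrforback} to the single anticipating equation \eqref{eqabstrforbackred} and apply Theorem \ref{thabsrtHJBMVan}. First I would fix $Y\in B_1$ and $Z\in B_{1b}$, and given any $b(.)\in C_Y([a,T],B_1)$ invoke Theorem \ref{thabsrtHJBMVwell} together with Remark \ref{remonback} to produce the unique mild solution $f(t)=f(t;Z,b(.))\in C([a,T],B_{1b})$ of the backward equation; here the relevant hypotheses are conditions (AFB)(ii)--(iii) for the backward triple and the smoothing property for $A^b$, and the fact that $H^b$ has linear growth guarantees the global (on $[a,T]$) existence of $f$. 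The key point to extract at this step is the Lipschitz dependence of $f$ on the data: from \eqref{eq00thabsrtHJBMVwell} applied to the backward equation, and from the Lipschitz estimate \eqref{HamLipschparrepb} of $H^b$ in its last (anticipating) argument, one gets
\[
\|f(\cdot;Z,b^1(\cdot))-f(\cdot;Z,b^2(\cdot))\|_{C([a,T],B_{1b})}\le K_b\,\|b^1(\cdot)-b^2(\cdot)\|_{C([a,T],B)},
\]
and similarly a Lipschitz bound in $Z$; the constant $K_b$ depends on $T-a$ (and on the bound for $\|Z\|_{1b}$ if $(L_H^b)'\neq 0$, after the preliminary uniform-boundedness step as in the proof of Theorem \ref{thabsrtHJBMVwell}).

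Next I would verify that equation \eqref{eqabstrforbackred} fits the framework of Theorem \ref{thabsrtHJBMVan}. The map
\[
u\bigl(t,f(t;Z,b(\cdot)),Df(t;Z,b(\cdot))\bigr)
\]
plays the role of the anticipating control; it is a continuous mapping $\R\times C([a,T],B_1)\to B^{con}$, and its Lipschitz continuity in $b(\cdot)$ follows by composing the Lipschitz bound \eqref{eqcontLipcon} of $u$ (which controls $\|u(t,f_0,\dots,f_n)-u(t,\tilde f_0,\dots,\tilde f_n)\|$ by $\sum_j\|f_j-\tilde f_j\|_b$), the fact that $D^b_j\in\LC(B_{1b},B_b)$ with norm $\le 1$, and the estimate on $f$ above; this yields \eqref{eqabstrHJBMVancond} with $L_u$ replaced by $L_u(n+1)K_b$. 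Since conditions (AFB)(i)--(iii) for the forward triple are exactly conditions (A), Theorem \ref{thabsrtHJBMVan} applies: for any $r>0$ there is $T_0$ such that for $T<T_0$ and $\|Y\|_1\le r$ the reduced equation \eqref{eqabstrforbackred} has a unique mild solution $b(t)=b(t;Y)$, depending Lipschitz continuously on $Y$ inside the ball. Setting $f(t)=f(t;Z,b(\cdot;Y))$ then gives a mild solution pair $(b,f)$ of \eqref{eqabstrforback}, and conversely any mild solution of the system yields, by the uniqueness in Theorem \ref{thabsrtHJBMVwell} for the backward equation, a solution of the reduced equation, so uniqueness transfers back.

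Finally, the local Lipschitz dependence on $(Y,Z)$ is assembled from the two Lipschitz estimates: dependence on $Y$ is immediate from the last clause of Theorem \ref{thabsrtHJBMVan}; dependence on $Z$ requires re-running the contraction argument underlying Theorem \ref{thabsrtHJBMVan} with $Z$ as an extra parameter — since $Z$ enters only through $f(\cdot;Z,b(\cdot))$ and hence through the control term, and since that dependence is Lipschitz with a constant that is uniform for $Z$ in bounded sets, the fixed point $b(\cdot;Y,Z)$ inherits a Lipschitz bound in $Z$, and then so does $f$. The main obstacle is bookkeeping the constants when $L'_H$, $(L^b_H)'\neq 0$: one must first establish, via the linear-growth conditions \eqref{HamLingrC} on $H$ and $H^b$, that all Picard iterates remain in a fixed ball of $B_1$ (resp.\ $B_{1b}$) on $[a,T]$ for $T$ small, exactly as in the closing remark of the proof of Theorem \ref{thabsrtHJBMVwell}, so that the Lipschitz constants $L_H(1+L'_H\cdot\text{(ball radius)})$ are effectively finite and the smallness of $T-a$ can be chosen to beat them; once the a priori ball is fixed, the contraction estimates go through as before. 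No genuinely new analytic difficulty arises beyond what Theorems \ref{thabsrtHJBMVwell} and \ref{thabsrtHJBMVan} already supply.
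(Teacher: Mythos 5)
Your proposal is correct and follows essentially the same route as the paper: reduce the system to the single anticipating equation \eqref{eqabstrforbackred} by solving the backward equation for a given curve $b(\cdot)$ via Theorem \ref{thabsrtHJBMVwell} (and Remark \ref{remonback}), verify the Lipschitz condition \eqref{eqabstrHJBMVancond} by composing \eqref{eqcontLipcon} with the Lipschitz dependence of $f$ and $Df$ on $b(\cdot)$ in the deeper space $B_{1b}$, and then invoke Theorem \ref{thabsrtHJBMVan}. You supply more bookkeeping (the explicit constant $L_u(n+1)K_b$, the dependence on $Z$, and the a priori ball when $L_H'$, $(L_H^b)'\neq 0$) than the paper's terse argument, but no new idea is involved.
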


\begin{proof} The discussion above shows that the statement is reduced to the well-posedness
of equation \eqref{eqabstrforbackred}.
To apply Theorem \ref{thabsrtHJBMVan} we just have to check
the Lipschitz estimate \eqref{eqabstrHJBMVancond}, which in the present case has the form
\begin{equation}
\label{eq1thabsrforback}
\|u(t,f(t;Z,b(.)),Df(t;Z,b(.)))-u(t,f(t;Z,\tilde b(.)),Df(t;Z,\tilde b(.)))\|_{con}
\end{equation}
\[
\le \|b(.)-\tilde b(.)\|_{C([a,T],B_1)}.
\]
But this holds, because the function $u$ is a Lipschitz continuous function of its arguments
(by \eqref{eqcontLipcon}) and
$f$ and $Df$ are Lipschitz functions of $b(.)$ by Theorem \ref{thabsrtHJBMVwell}.
Let us stress that this application was the main reason for us to get the continuous
dependence of the solutions of the HJB and McKean-Vlasov equations in a 'deeper' space
$B_1$ (not just in $B$).
\end{proof}

\section{Fractional forward-backward systems}
\label{fractforwbakwsystem}

The fractional analogue of system   \eqref{eqabstrforback} is ,of course, the system
\begin{equation}
\label{eqabstrforbackfr}
\left\{
\begin{aligned}
& D_{a+*} b(t)=Ab(t)+H(t,b(t),Db(t),u(t,f(t),Df(t))), \quad b(a)=Y, \quad t\in [a,T], \\
& D_{T-*} f(t)=-A^bf(t)+H^b(t,f(t),D^bf(t), b_{\ge t}), \quad f(T)=Z, \quad t\in [a,T].
\end{aligned}
\right.
\end{equation}

Under the same assumptions (AFB) we can conclude from Theorem \ref{thabsrtHJBMVwellfr} that
for a given  curve $b(.)\in C_Y([a,T],B)$ and $Z\in B_{1b}$ there exists a unique
mild solution $f(t)=f(t; Z, b(.))\in C([a,T],B_{1b})$ of the second equation of system  \eqref{eqabstrforbackfr}.
Substituting this solution into the first equation of  system  \eqref{eqabstrforbackfr} we get the equation
\begin{equation}
\label{eqabstrforbackredfr}
D_{a+*} b(t)=Ab(t)+H(t,b(t),Db(t),u(t,f(t;Z,b(.)),Df(t;Z,b(.)))), \quad t\in [a,T],
\end{equation}
which is of type \eqref{eqabstrHJBMVfan}.

The following result is proved by the same exact argument as Theorem \ref{thabsrforback}:

\begin{theorem}
\label{thabsrforbackfr}
Let conditions (AFB)
and smoothing property \eqref{eqsmoothimngas} for $A$ and $A^b$ hold. Then, for any
$Y\in B_1$, $Z\in B_{1b}$ there exists $T_0$ such that the forward-backward system
\eqref{eqabstrforbackfr} has unique mild solution $(b(t),f(t))$ for all $T<T_0$,
which depends locally Lipschitz on $Y$ and $Z$.
\end{theorem}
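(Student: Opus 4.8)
The plan is to reproduce step for step the argument used for Theorem~\ref{thabsrforback}, substituting the fractional well-posedness result Theorem~\ref{thabsrtHJBMVwellfr} for its classical version Theorem~\ref{thabsrtHJBMVwell} and keeping the anticipating Theorem~\ref{thabsrtHJBMVan}, whose proof (via the Zolotarev representation \eqref{eqMitLefZol}) already covers the fractional anticipating equation \eqref{eqabstrHJBMVfan}. In this way the coupled system \eqref{eqabstrforbackfr} is reduced to the single anticipating fractional equation \eqref{eqabstrforbackredfr}.

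First I would fix $Z\in B_{1b}$ and a curve $b(.)\in C_Y([a,T],B_1)$ and view the backward equation
\[
D^{\be}_{T-*} f(t)=-A^bf(t)+H^b(t,f(t),D^bf(t), b_{\ge t}), \quad f(T)=Z,
\]
as a fractional equation of the form treated in Theorem~\ref{thabsrtHJBMVwellfr}, posed on the triple $B_{2b}\subset B_{1b}\subset B_b$ in its backward version (Remark~\ref{remonback}): the whole curve $b(.)$ plays the role of the parameter, with parameter space $C([a,T],B)$, and the dependence on $b_{\ge t}$ is absorbed into the continuous $t$-dependence of $H^b$. Conditions (AFB)(ii)--(iii) together with the smoothing of $A^b$ supply precisely the required hypotheses, so one obtains a unique mild solution $f(t)=f(t;Z,b(.))\in C([a,T],B_{1b})$, and \eqref{eq00thabsrtHJBMVwell} shows that $f$ --- and hence $D^bf$, since the components of $D^b$ lie in $\LC(B_{1b},B_b)$ --- depends Lipschitz continuously on the pair $(Z,b(.))$. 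Substituting $f$ into the forward equation gives \eqref{eqabstrforbackredfr}, which is of type \eqref{eqabstrHJBMVfan} with anticipating control functional $b(.)\mapsto u(t,f(t;Z,b(.)),D^bf(t;Z,b(.)))$. Composing the Lipschitz bound \eqref{eqcontLipcon} for $u$ with the Lipschitz dependence of $f$ and $D^bf$ on $b(.)$ just obtained yields the Lipschitz estimate \eqref{eqabstrHJBMVancond} (as in \eqref{eq1thabsrforback}), so Theorem~\ref{thabsrtHJBMVan} applies and produces $T_0$ such that, for $T<T_0$ and $\|Y\|_1$ bounded, \eqref{eqabstrforbackredfr} has a unique mild solution $b(t)=b(t;Y)$ depending Lipschitz continuously on $Y$. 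The pair $(b(t),f(t;Z,b(.)))$ is then the unique mild solution of the system, uniqueness following from the uniqueness for the backward equation as in the classical case.

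Finally, local Lipschitz dependence on $Z$ is obtained by a routine perturbation of the contraction argument: by \eqref{eq00thabsrtHJBMVwell} the effective Hamiltonian of \eqref{eqabstrforbackredfr} is Lipschitz in $Z$ uniformly over $b(.)$ in the invariant ball produced in the proof of Theorem~\ref{thabsrtHJBMVan}, hence so is the fixed point $b(t;Y,Z)$, and then so is $f(t;Z,b(.))$ by \eqref{eq00thabsrtHJBMVwell} again. The only point genuinely requiring care --- the same one stressed after Theorem~\ref{thabsrforback} --- is that this reduction is legitimate precisely because Theorem~\ref{thabsrtHJBMVwellfr} gives well-posedness of the backward equation in the \emph{deeper} space $B_{1b}$ (not merely in $B_b$), which is what makes $D^bf$, and therefore the control $u$, a Lipschitz function of $b(.)$. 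Beyond this, no new difficulty appears in passing to the fractional setting: the fractional-specific ingredients --- boundedness of $E_{\be}(A(t-a)^{\be})$ and $E_{\be}'(A(t-s)^{\be})$ and integrability of the weakly singular kernel $(t-s)^{-[1-\be(1-\om)]}$, which holds since $\be(1-\om)>0$ --- have already been absorbed into Theorems~\ref{thabsrtHJBMVwellfr} and \ref{thabsrtHJBMVan}, leaving only the bookkeeping indicated above.
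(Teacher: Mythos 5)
Your proposal is correct and follows essentially the same route as the paper: the paper proves Theorem \ref{thabsrforbackfr} by observing that the reduction of \eqref{eqabstrforbackfr} to the single anticipating fractional equation \eqref{eqabstrforbackredfr} (via Theorem \ref{thabsrtHJBMVwellfr} applied to the backward equation with $b(.)$ as parameter) puts it exactly in the framework of Theorem \ref{thabsrtHJBMVan}, after which the argument of Theorem \ref{thabsrforback} — verifying \eqref{eqabstrHJBMVancond} by composing \eqref{eqcontLipcon} with the Lipschitz dependence of $f$ and $D^bf$ on $b(.)$ in the deeper space $B_{1b}$ — carries over verbatim. Your additional remarks on the dependence on $Z$ and on the role of $B_{1b}$ are consistent with, and slightly more explicit than, the paper's one-line proof.
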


\section{Dual Banach triples and related nonlinear equations}
\label{secdualtriples}

In applications the two Banach triples participating in (AFB) are usually linked by duality,
and operators $A, A^b$ are dual, which allows one to derive the required properties of $A^b$
from the corresponding properties of $A$.

It is convenient to introduce the corresponding setting in an abstract way.
Following \cite{Kobook19}, we shall call a pair of Banach space $(B,C)$ a {\it dual pair},
if each of these spaces is a closed subset of the dual of the respective other space that separates
the points of the latter. Of course, if $B'$ is the Banach dual of $B$, then both $(B,B')$
and $(B',B)$ are dual pairs. But our symmetric notion includes also the pairs like $(C_{\infty}(\R^d), L_1(\R^d))$,
which are crucial for our present discussion. The duality is a bilinear form $(B,C)\to \R$, which
will be always denoted by $(b,c)$, $b\in B, c\in C$. Recall that the requirement that $B$ is a subset of the dual to $C$
means the following formula for the norm:
\[
\|b\|_B=\sup_{\|c\|\le 1} |(b,c)|.
\]

Let us say that the Banach triple $B_2\subset B_1\subset B$ is {\it generated} by the vector-valued operator
$D=(D_1, \cdots, D_n)$ if
\begin{equation}
\label{eqnormforgentri}
\|b\|_1=\|b\|+\sum_{j=1}^n \|D_j b\|,
\quad
\|b\|_2=\|b\|_1+\sum_{i\le j} \|D_iD_j b\|.
\end{equation}
Let the two Banach triples $B_2\subset B_1\subset B$ and $B_{2b}\subset B_{1b}\subset B_b$ (with the norms
of the second denoted $\|.\|_{2b}$, $\|.\|_{1b}$ and $\|.\|_b$) be generated by the operators
$D=(D_1, \cdots, D_n)$ and $D^b=(D^b_1, \cdots, D^b_n)$ respectively.
Let us say that the triples are {\it dual} if $(B,B_b)$ forms a dual pair and operators $D_j$ and $-D^b_j$ are dual,
in the sense that
 \[
 (D_jb,f)=-(b,D^b_jf), \quad b\in B_1, \, f\in B_{1b}, \, j=1, \cdots, n.
 \]
With some abuse of notation, we shall write just $D$ also for the operator $D^b$, so that
the previous equation becomes $(D_jb,f)=-(b, D_jf)$, and we shall say that the dual triples
are generated by $D$. Of course in all applications, both $D$ and $D^b$ are the gradients,
so that this unified notation is fully natural.

When discussing duality it is useful to work with two versions of the smoothing property
\eqref{eqsmoothimngas}. Namely, let the Banach triple $B_2\subset B_1\subset B$ be
{\it generated} by $D=(D_1, \cdots, D_n)$ and $A$ generate a strongly continuous
semigroup $e^{At}$ in $B$. Let us say that the semigroup $e^{At}$ is {\it smoothing
from the right in the 1st order}, if \eqref{eqsmoothimngas} holds.
Since the triple is generated by $D$, this is equivalent to the estimate
\begin{equation}
\label{eqsmoothimngasgentr}
\|D_je^{At}b\|\le \ka t^{-\om} \|b\|, \quad j=1, \cdots , n, \quad t\in (0,1],
\end{equation}
for all $b\in B$ (possibly with another constant $\ka$ than in \eqref{eqsmoothimngas},
but with the same $\om$). 
Let us say that the semigroup $e^{At}$ is {\it smoothing from
 the left in the 1st order}, if the operators $e^{At}D_j$ defined on $B_1$ can be extended
to bounded operators in $B$ for any $t>0$ such that
\begin{equation}
\label{eqsmoothimngasgentrleft}
\|e^{At}D_jb\|\le \ka t^{-\om} \|b\|, \quad j=1, \cdots , n, \quad t\in (0,1],
\end{equation}
for all $b\in B$.

For a Banach triple
generated by $D$ let us say that the semigroup $e^{At}$ is {\it smoothing from the right in the 2nd order}, if
\begin{equation}
\label{eqsmoothimngasgentr2nd}
\|D_je^{At}b\|_1\le \ka t^{-\om} \|b\|_1, \quad j=1, \cdots , n, \quad t\in (0,1],
\end{equation}
holds, and
it is {\it smoothing from the left in the 2nd order}, if
\begin{equation}
\label{eqsmoothimngasgentl2nd}
\|e^{At}D_jb\|_1\le \ka t^{-\om} \|b\|_1, \quad j=1, \cdots , n, \quad t\in (0,1],
\end{equation}
holds.

The next result supplies some links between various smoothing properties.
\begin{theorem}
\label{thAindualtriples}
Let the two Banach triples $B_2\subset B_1\subset B$ and $B_{2b}\subset B_{1b}\subset B_b$,
generated by the operators $D=(D_1, \cdots, D_n)$, be dual.
Let the operators 
\[
A\in \LC(B_2,B), \quad A^b\in \LC(B_{2b},B_b)
\]
be dual, $A^b=A^*$, in the sense that
 \[
 (Ab,f)=(b,A^*f), \quad b\in B_2, \, f\in B_{2b},
 \]
and let $A$ and $A^b=A^*$ generate strongly continuous semigroups $e^{At}$ in $B$ and $e^{A^*t}$ in $B_b$, having cores
 contained in $B_1$ and $B_{1b}$ respectively.

(i) If $e^{At}$ is smoothing in the 1st order from the right (respectively, from the left),
then $e^{A^*t}$ is smoothing in the 1st order from the left (respectively, from the right),
with the same parameter $\om$, and vice versa.

(ii) If the commutators $[D_j, e^{At}]$ extend to uniformly bounded (in $t\in [0,T]$ for any $T$)
operators in $B$, then (a) $e^{At}$ is a bounded semigroup in $B_1$ (that is,
the second estimate of \eqref{AgeninBB1} follows from the 1st one) and  (b) $e^{At}$
is smoothing in the 1st order from the right if and only if
it is smoothing in the 1st order from the left.

(iii)  $[D_j, e^{At}]$ extend to uniformly bounded operators in $B$ if and only if
$[D_j, e^{A^*t}]$ extend to uniformly bounded operators in $B_b$.

(iv) If $e^{A^*t}$ is smoothing from the left of the 1st order, then $e^{At}$ is smoothing
from the left in the 2nd order.

(v) If the commutators $[D_j, e^{At}]$ extend to uniformly bounded (in $t\in [0,T]$ for any $T$)
operators in $B_1$, then (a) $e^{At}$ is a bounded semigroup in $B_2$ and (b) $e^{At}$
is smoothing in the 2nd order from the right if and only if
it is smoothing in the 2nd order from the left.
\end{theorem}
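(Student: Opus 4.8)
The plan is to derive all five statements from two elementary duality identities, the algebraic identity $D_je^{At}=e^{At}D_j+[D_j,e^{At}]$, and the trivial inequality $t^{-\om}\ge 1$ on $(0,1]$. The duality identities are that of the semigroups, $(e^{At}b,f)=(b,e^{A^*t}f)$ for $b\in B$, $f\in B_b$ (which holds because $A$ and $A^*$ are dual generators), and that of the derivatives, $(D_jb,f)=-(b,D_jf)$ for $b\in B_1$, $f\in B_{1b}$. Composing them on the cores yields, as identities between densely defined operators acting in the dual pair $(B,B_b)$,
\begin{gather*}
(e^{At}D_jb,f)=-(b,D_je^{A^*t}f),\qquad (D_je^{At}b,f)=-(b,e^{A^*t}D_jf),\\
([D_j,e^{At}]b,f)=(b,[D_j,e^{A^*t}]f).
\end{gather*}
Using $\|g\|_b=\sup_{\|b\|\le 1}|(b,g)|$ and its symmetric counterpart on $B$, one reads off: for each fixed $t>0$, the operator $D_je^{At}$ (resp. $e^{At}D_j$, resp. $[D_j,e^{At}]$) extends to a bounded operator on $B$ if and only if $e^{A^*t}D_j$ (resp. $D_je^{A^*t}$, resp. $[D_j,e^{A^*t}]$) extends to a bounded operator on $B_b$, and then their operator norms coincide. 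Since, in the generated-triple language \eqref{eqnormforgentri}, ``$e^{At}$ smoothing from the right (resp. from the left) in the 1st order'' is exactly the bound $\|D_je^{At}\|_{B\to B}\le\ka t^{-\om}$ (resp. $\|e^{At}D_j\|_{B\to B}\le\ka t^{-\om}$) on $(0,1]$, parts (i) and (iii) follow immediately.

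For the parts (b) of (ii) and (v) I would use the commutator identity directly. If, say, $e^{At}$ is 1st-order smoothing from the right, so $\|D_je^{At}b\|\le\ka t^{-\om}\|b\|$ on $(0,1]$, then $\|e^{At}D_jb\|\le\|D_je^{At}b\|+\|[D_j,e^{At}]b\|\le\ka t^{-\om}\|b\|+C\|b\|\le(\ka+C)t^{-\om}\|b\|$, with $C$ the $B$-bound on the commutator from the hypothesis of (ii); hence $e^{At}$ is also smoothing from the left, and the reverse implication is identical. For part (b) of (v) the same computation is carried out in the norm $\|\cdot\|_1$, using the hypothesised uniform $B_1$-bound on $[D_j,e^{At}]$. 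For the parts (a) I would take $b$ in the core, expand $D_je^{At}b=e^{At}D_jb+[D_j,e^{At}]b$ --- and, for part (a) of (v), expand $D_iD_je^{At}b$ by applying the identity twice --- bound the $e^{At}(\cdot)$ summands by the $B$-bound \eqref{AgeninBB1} (resp. by the $B_1$-bound for (v)) and the commutator summands by their hypothesised uniform bounds, add up through \eqref{eqnormforgentri} to get $\|e^{At}b\|_1\le C(t)\|b\|_1$ (resp. $\|e^{At}b\|_2\le C(t)\|b\|_2$) with $C$ bounded on bounded $t$-intervals, extend to all of $B_1$ (resp. $B_2$) by density of the core, and finally upgrade $C(t)$ to the exponential form $M_1e^{m_1t}$ via the semigroup property.

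For (iv) I would first invoke (i): the 1st-order left-smoothing of $e^{A^*t}$ is equivalent to the 1st-order right-smoothing of $e^{At}$, so $\|D_ie^{At}g\|\le\ka t^{-\om}\|g\|$ for every $g\in B$ and $t\in(0,1]$. Then for $b\in B_1$, so that $D_jb\in B$, formula \eqref{eqnormforgentri} gives $\|e^{At}D_jb\|_1=\|e^{At}D_jb\|+\sum_i\|D_ie^{At}D_jb\|$; the leading term is $\le Me^{mt}\|D_jb\|\le Me^{m}t^{-\om}\|b\|_1$, while each remaining term, being $\|D_ie^{At}(D_jb)\|$ with $D_jb\in B$, is $\le\ka t^{-\om}\|D_jb\|\le\ka t^{-\om}\|b\|_1$. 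Summing gives \eqref{eqsmoothimngasgentl2nd} with constant $Me^{m}+n\ka$.

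\textbf{The main obstacle} is not the algebra, which is short, but the functional-analytic bookkeeping needed to make it rigorous. All the compound operators above are a priori only densely defined (on the cores, on $B_1$, on $B_2$), so one must verify that the estimates obtained on these dense subspaces persist under the natural extensions --- and, in particular, that the cores are dense in the stronger norms. The most delicate point is that the adjoint, with respect to the dual pair $(B,B_b)$, of a bounded operator on $B$ maps $B_b$ into $B_b$ rather than merely into the larger Banach dual $B'$; this is exactly where the assumption that $e^{At}$ and $e^{A^*t}$ possess cores contained in $B_1$ and $B_{1b}$ enters, since it lets one identify the weakly-defined functional $D_je^{A^*t}f$ (bounded on $B$, hence an element of $B'$) with a genuine element of $B_b$, so that ``$e^{A^*t}$ is smoothing'' has its intended meaning inside the generated triple. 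The invariance of the cores then ensures that every commutator and composition identity is applied only where all of its terms are defined.
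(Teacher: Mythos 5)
Your proposal is correct and follows essentially the same route as the paper: the three duality identities give (i) and (iii) (and the computation behind (iv), which the paper does by a direct duality estimate rather than by first invoking (i), an immaterial difference), while (ii) and (v) are handled by the same commutator expansion $D_je^{At}=e^{At}D_j+[D_j,e^{At}]$ with the bounded commutator term absorbed via $t^{-\om}\ge 1$. The functional-analytic caveats you flag (density of the cores, extension of the weakly defined adjoints into $B_b$) are exactly the points the paper disposes of with its "since the core is dense" remark.
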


\begin{proof}
(i) Let $e^{At}$ be smoothing in the 1st order from the right.
If $f\in B_{1b}$ belongs to the core of $e^{A^*t}$, then
\[
\|D_j^b e^{A^*t}f\|_b=\sup_{\|b\|\le 1} |(b,D_j^b e^{A^*t}f)|
=\sup_{\|b\|\le 1, b\in B_1} |(b,D_j^b e^{A^*t}f)|
= \sup_{\|b\|\le 1, b\in B_1} |(D_jb,e^{A^*t}f)
\]
\[
=\sup_{\|b\|\le 1, b\in B_1} |(e^{At}D_jb,f)|\le \ka e^{-\om t} \|f\|_b.
\]
Since the core is dense, the operator $D_j e^{A^*t}$ extends to the bounded operator $B_b\to B_b$.
Similarly, other statements are proved.

(ii) (a) If $b\in B_1$, then for $t\in [0,T]$,
\[
\|D_j e^{At}b\|\le \|e^{At} D_j b\|+\|[D_j,e^{At}]b\|\le M e^{mt} \|b\|_1 +C(T) \|b\|.
\]
and thus
\[
\|e^{At}b\|_1 \le  nM e^{mt} \|b\|_1+(1+C(T))\|b\|\le (nM e^{mt}+1+C(T)) \|b\|_1,
\]
so that the operators $e^{At}$ are bounded in $B_1$ uniformly for $t\in [0,T]$.
Statement (b) is straightforward.

(iii) This follows from the duality relation:
\[
([D_j, e^{At}]b, f)=(b,[D_j, e^{A^*t}]f).
\]

(iv) To estimate $\|e^{At}D_jb\|_1$ we need to estimate $\|D_ie^{At}D_jb\|$. We have
\[
\|D_ie^{At}D_jb\|=\sup_{f\in B_{1b},\|f\|_b\le 1}|(D_ie^{At}D_jb,f)|
=\sup_{f\in B_{1b},\|f\|_b\le 1}|(D_jb, e^{A^*t}D_if)|
\]
\[
\le \|b\|_1 \sup_{\|f\|_b\le 1}\|e^{A^*t}D_i f\|_b
\le \ka t^{-\om} \|b\|_1.
\]

(v) (a) In order to estimate $\|e^{At}b\|_2$, we need the estimate for $\|D_iD_je^{At}b\|$.
We have
\[
\|D_iD_je^{At}b\|=\|D_i e^{At} D_jb\|+\|D_i[e^{At},D_j] b\|
\le \|e^{At} D_iD_jb\|+\|[D_i, e^{At}] D_jb\|+\|D_i[e^{At},D_j] b\|
\]
\[
 \le \|e^{At}\|_{B\to B} \|b\|_2+\|[D_i, e^{At}]\|_{B_1\to B_1} \|b\|_2
 +\|[D_i, e^{At}]\|_{B_1\to B_1} \|b\|_1
\]
\[
 \le \left(\|e^{At}\|_{B\to B}+2\|[D_i, e^{At}]\|_{B_1\to B_1}\right) \|b\|_2.
 \]

(v) (b) Assume that $e^{At}$ is smoothing in the 2nd order from the left. To show that
it is smoothing in the 2nd order from the right, we need to
estimate $\|D_je^{At}b\|_1$ and thus $\|D_iD_je^{At}b\|$. We have
\[
\|D_iD_je^{At}b\|\le \|D_ie^{At}D_jb\|+\|D_i[D_j,e^{At}]b\|,
\]
and both terms are estimated by $\|b\|_1$, as required.
\end{proof}

For dual Banach triples and dual generators, we can reformulate Theorems \ref{thabsrforback}
and \ref{thabsrforbackfr} almost without any additional assumptions on $A^*$.
The following result is a direct consequence of Theorems
 \ref{thabsrforback}, \ref{thabsrforbackfr} and \ref{thAindualtriples}.
  
\begin{theorem}
\label{thforbackindual}
Let the two Banach triples $B_2\subset B_1\subset B$ and $B_{2b}\subset B_{1b}\subset B_b$,
generated by the operators $D=(D_1, \cdots, D_n)$, be dual.
Let the operators
\[
A\in \LC(B_2,B), \quad A^b\in \LC(B_{2b},B_b)
\]
be dual, $A^b=A^*$, in the sense that
 \[
 (Ab,f)=(b,A^*f), \quad b\in B_2, \, f\in B_{2b},
 \]
and let $A$ and $A^b=A^*$ generate strongly continuous semigroups $e^{At}$ in $B$ and $e^{A^*t}$ in $B_b$, having cores
contained in $B_1$ and $B_{1b}$ respectively. Moreover, $A$ is smoothing of the 1st order from the right or from the left
and the operators $[e^{At},D_j]$ extend to uniformly bounded operators $B\to B$.
Let the assumptions (iii) and (iv) about $H, H^b$, $u$ of conditions (AFB) hold.
Then, for any $Y\in B_1$, $Z\in B_{1b}$ there exists $T_0$ such that the forward-backward systems
\eqref{eqabstrforback} and
\eqref{eqabstrforbackfr} with $A^b=A^*$ have unique mild solutions for all $T<T_0$.
\end{theorem}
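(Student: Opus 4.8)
The plan is to deduce Theorem \ref{thforbackindual} directly from Theorems \ref{thabsrforback} and \ref{thabsrforbackfr} by verifying that the hypotheses listed here (dual triples, dual generators, one-sided first-order smoothing for $A$, uniformly bounded commutators $[e^{At},D_j]$, plus (iii)--(iv) of (AFB)) imply the full set of conditions (AFB) for \emph{both} the forward and the backward data. The forward part is essentially given: $A$, $D$, $B_2\subset B_1\subset B$, and $H$ satisfy (A)(i)--(iii) and (AFB)(iii) by assumption, and first-order smoothing \eqref{eqsmoothimngas} for $A$ is exactly what is assumed (or follows from smoothing from the left via the commutator bound, using Theorem \ref{thAindualtriples}(ii)(b)). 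So the real content is to produce the backward data $A^b=A^*$, $D^b$, $B_{2b}\subset B_{1b}\subset B_b$ satisfying (A)(i)--(ii) and smoothing \eqref{eqsmoothimngas}.

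First I would unpack what Theorem \ref{thAindualtriples} gives us. By part (i), since $e^{At}$ is smoothing of the 1st order from the right (or from the left), $e^{A^*t}$ is smoothing of the 1st order from the left (resp.\ from the right); by part (iii) the commutators $[e^{A^*t},D_j]$ extend to uniformly bounded operators on $B_b$ because $[e^{At},D_j]$ do so on $B$; by part (ii)(a) applied to $A^*$ this makes $e^{A^*t}$ a bounded semigroup on $B_{1b}$ (so the second bound in \eqref{AgeninBB1} holds for $A^*$), and by part (ii)(b) smoothing from the left and from the right are equivalent for $A^*$, so in particular $e^{A^*t}$ is smoothing of the 1st order from the right, i.e.\ \eqref{eqsmoothimngas} holds for $A^*$ with the same $\om$. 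That is precisely the smoothing hypothesis needed to invoke Theorems \ref{thabsrforback} and \ref{thabsrforbackfr} for the backward equation. The structural conditions (A)(i)--(ii) for the backward triple hold because the triple is generated by $D^b$ in the sense of \eqref{eqnormforgentri}, $A^b=A^*$ is assumed to generate a strongly continuous semigroup with core in $B_{1b}$, and $B_{2b}$ is an invariant core (this last point may need a short remark: invariance of $B_{2b}$ under $e^{A^*t}$ follows from the commutator bound, since $\|D_iD_je^{A^*t}f\|_b$ is controlled via Theorem \ref{thAindualtriples}(v)(a)-type estimates, or one simply adds it as part of the standing hypothesis that $A^*$ generates a semigroup with the stated core).

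Once (AFB) is fully in force, the conclusion is immediate: Theorem \ref{thabsrforback} gives, for any $Y\in B_1$, $Z\in B_{1b}$, a $T_0$ and a unique mild solution $(b(t),f(t))$ of \eqref{eqabstrforback} for $T<T_0$ with the asserted local Lipschitz dependence, and Theorem \ref{thabsrforbackfr} gives the same for the fractional system \eqref{eqabstrforbackfr}. I would write this as two short paragraphs: one assembling the hypotheses of (AFB) from Theorem \ref{thAindualtriples}, and one citing the two forward-backward theorems.

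The main obstacle — really the only non-bookkeeping point — is making sure the chain of implications in Theorem \ref{thAindualtriples} actually closes in the direction we need: we are \emph{given} one-sided smoothing for $A$ and the commutator bound for $[e^{At},D_j]$, and we must extract \eqref{eqsmoothimngas} (smoothing from the right, in the form used by Theorems \ref{thabsrHJBMVwell}/\ref{thabsrHJBMVwellfr}) for $A^*$. This requires chaining (iii) $\Rightarrow$ commutator bound for $[e^{A^*t},D_j]$, then (ii)(a) $\Rightarrow$ boundedness of $e^{A^*t}$ on $B_{1b}$, then (i) and (ii)(b) $\Rightarrow$ two-sided first-order smoothing for $A^*$; and symmetrically one must note that if $A$ is only assumed smoothing from the left, (ii)(b) upgrades it to smoothing from the right as well, so that both $A$ and $A^*$ satisfy \eqref{eqsmoothimngas}. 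After that, everything is a verbatim appeal to the earlier theorems, so no further estimates are needed.
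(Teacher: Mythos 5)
Your proposal is correct and follows exactly the route the paper intends: the paper states Theorem \ref{thforbackindual} as a direct consequence of Theorems \ref{thabsrforback}, \ref{thabsrforbackfr} and \ref{thAindualtriples}, and your chain (ii)(b) $\Rightarrow$ two-sided smoothing for $A$, (i) $\Rightarrow$ smoothing for $A^*$, (iii) + (ii)(a) $\Rightarrow$ the commutator bound and boundedness of $e^{A^*t}$ on $B_{1b}$ is precisely the bookkeeping the paper leaves implicit. Your remark about the invariant-core condition for $B_{2b}$ is a fair observation of a point the paper also glosses over, and your resolution of it is adequate.
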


Let us specify the abstract setting to a more concrete forward-backward system consisting of the coupled McKean Vlasov (forward) and HJB (backward) equations for $t\in [a,T]$:

\begin{equation}
\label{eqforbackHJBMVfr}
\left\{
\begin{aligned}
& D_{a+*} g(t,x)=A^*g(t,x)+\sum_{j=1}^d h_j(t,g(t,.),u(t,x,\frac{\pa f}{\pa x}(t,x))) \frac{\pa g}{\pa x_j},
\quad g(a)=Y, \\
& D_{T-*} f(t,x)=-A f(t,x)+H(t,x,\frac{\pa f}{\pa x}(t,x), g_{\ge t}), \quad f(T)=Z.
\end{aligned}
\right.
\end{equation}

Here the dual Banach triples are $W_2(\R^d)\subset W_1(\R^d)\subset L_1(\R^d)$ for the first (forward) equation
and $C^2_{\infty}(\R^d)\subset C^1_{\infty}(\R^d)\subset C_{\infty}(\R^d)$ for the second (backward) equation,
both generated by the derivative operator $D=\pa/\pa x$. The corresponding norms were defined at
the end of Section \ref{abstrMVHJB}.
The Banach space $B^{con}$ from assumption (AFB)
is the space of $U$-valued continuous functions on $R^d$, $U$ a compact subset of Euclidean space.
At the end of Section \ref{abstrMVHJB} were given examples of the
generators $A$ that satisfy the requirements of Theorem \ref{thforbackindual}, which supplies the conditions for
local well-posedness for system \eqref{eqforbackHJBMVfr}.

\section{Fractional McKean-Vlasov and HJB equations on manifolds}
\label{seceqman} 

Let
  \begin{equation}
\label{Lapgeom}
\De_{LB} \phi ={div} \, (\nabla \phi) =\frac{1}{\sqrt {\det g}}
\sum_{j,k}\frac{\pa}{\pa x_j} \left(\sqrt {\det g} \, g^{jk}\frac{\pa}{\pa x_k}\right)
 \end{equation}
denote the Laplace-Beltrami operator on a compact Riemannian manifold $(M,g)$ of dimension $N$,
with the Riemannian metric given by the matrix $g=(g_{jk}(x))$ and its inverse matrix $G=(g^{jk}(x))$.
Let $K(t,x,y)$ be the corresponding heat kernel, that is, $K(t,x,y)$ is the solution of the heat equation
$(\pa K/\pa t)=\De_{LB} K$ as a function of $(t>0,x\in M)$ and has the Dirac initial condition $K(0,x,y)=\de_y(x)$.
It is well known that the Cauchy problem for this heat equation is well posed in $M$ and the resolving operators
\begin{equation}
\label{eqheatsem}
S_tf(x)=e^{t\De_{LB}}=\int_M K(t,x,y) f(y) \, dv(y),
\end{equation}
where $dv(y)$ is the Remannian volume on $M$, form a strongly continuous semigroup of contractions
(the Markovian semigroup of the Brownian motion in $M$) in the space $C(M)$
of bounded continuous functions on $M$, equipped with the sup-norm.

Let $C^1(M)$ denote the space of
continuously differentiable functions on $M$ equipped with the norm
\begin{equation}
\label{eqnormC1M}
\|f\|_{C^1(M)}=\|f\|_{C(M)}+\sup_x \|\nabla f(x)\|_x,
\end{equation}
where in local coordinates
\begin{equation}
\label{eqmagngrad}
\|\nabla f(x)\|_x^2=\left(\nabla f(x), G(x)\nabla f(x)\right)
=\sum_{jk} g^{jk}(x) \frac{\pa f}{\pa x_j} \frac{\pa f}{\pa x_k},
\end{equation}
and $C^2(M)$ denote the space of twice continuously differentiable functions on $M$ equipped with the norm
\begin{equation}
\label{eqnormC2M}
\|f\|_{C^2(M)}=\|f\|_{C^1(M)}+\sup_x \|\nabla^2 f(x)\|_x,
\end{equation}
where in local coordinates
\begin{equation}
\label{eqmagnsecgrad}
\|\nabla ^2f(x)\|_x^2
=\sum_{jk} g^{jk}(x) g^{im}(x) \frac{\pa ^2f}{\pa x_j \pa x_i} \frac{\pa ^2f}{\pa x_k \pa x_m}.
\end{equation}

The gradient $\nabla f(x)$ is an element of $T^*M_x$, the cotangent space to $M$ at $x$,
and $\nabla^2 f$ is a tensor of type $(0,2)$. Formulas \eqref{eqmagngrad} and \eqref{eqmagnsecgrad}
represent the standard lifting of the Riemannian metric to tensors.

The dual Banach triples used for the analysis of equations on manifolds
are the natural analogs of the triples used for equations in $\R^d$. Namely,
these are the triple $C_2(M)\subset C_1(M)\subset C(M)$ (with the norms introduced above)
and the triple of Sobolev functional spaces  $W_2(M)\subset W_1(M)\subset L_1(M)$,
with the norms
\[
 \|f\|_{L_1(M)}=\int_M |f(x)| \, dv(x), \quad \|f\|_{W_1(M)}=\|f\|_{L_1(M)}+\int_M  \|\nabla f(x)\|_x  \, dv(x),
\]
\[
\|f\|_{W_2(M)}=\|f\|_{W_1(M)}+\int_M  \|\nabla^2 f(x)\|_x  \, dv(x).
\]

It is known that the semigroup $S(t)$ in $C(M)$ has $C^2(M)$
as its invariant core. It is also strongly continuous as  a semigroup in $L_1(M)$
(actually in all $L_p(M)$, $p\ge 1$), see \cite{AponLevyonman} and references therein.

\begin{remark}
\label{reminvartriple}
The identification of $\nabla f$ with the collection of $n$ functions
$\pa f/\pa x_j$, which is required to comply with the setting of Sections \ref{abstrMVHJB}
and \ref{secdualtriples},
can be justified only in local coordinates, but not globally. In order to have invariant
theory we cannot assume that the operator $D=(D_1,\cdots, D_n)$ act from $B=C(M)$ to $B^n$.
In the invariant exposition the gradient $D=\nabla$ maps $C(M)$ to the space $FT^*M$ of
covector fields on $M$, and $D^2=\nabla^2$ maps $C(M)$ to the symmetric tensor fields
of type $(0,2)$. However, the topologies of $C^1(M)$ and $C^2(M)$ (as well as $W_1(M)$
and $W_2(M)$) remain to be generated
by $D=\nabla$ and $D^2$ respectively, according to \eqref{eqnormC1M}, \eqref{eqnormC2M},
in analogy with \eqref{eqnormforgentri}. Thus the whole theory is recovered in this
slightly generalized setting.
\end{remark}

The key smoothing and smoothness preservation properties of this semigroup
needed for our theory are collected in the next result.

\begin{theorem}
 \label{propheatsemomansmoo}
 (i) The operators $S_t$ are smoothing from the right and from the left:
\begin{equation}
\label{eq1propheatsemomansmoo}
\|\nabla S_tf\|_{C(M)}\le C t^{-1/2} \|f\|_{C(M)}, \quad \|S_t \nabla f\|_{C(M)}\le C t^{-1/2} \|f\|_{C(M)},
 \end{equation}
with a constant $C$, uniformly for any compact interval of time.

(ii) The operators $S_t$ are smoothness preserving:
\begin{equation}
\label{eq2propheatsemomansmoo}
\|S_tf\|_{C^1(M)}\le C \|f\|_{C^1(M)},
 \end{equation}
with a constant $C$, uniformly for any compact interval of time.

 (iii) The commutators $[\nabla,S_t]$ extend to bounded operators in $C(M)$,
 uniformly for any compact interval of time.
 \end{theorem}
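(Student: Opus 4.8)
The plan is to reduce all three claims to two classical ingredients on a compact Riemannian manifold: sharp Gaussian bounds for the heat kernel $K$ together with its first spatial gradient, and the Bochner--Weitzenb\"ucher identity $\nabla\De_{LB}=(\De^{(1)}-\mathrm{Ric})\nabla$ on functions, where $\De^{(1)}=-\nabla^*\nabla$ is the rough (connection) Laplacian. For part~(i) I would start from the standard estimate (from the parametrix / Minakshisundaram--Pleijel construction, or equivalently from the Li--Yau gradient estimate) that there are constants $C,c>0$ with $|\nabla_x K(t,x,y)|\le C\,t^{-(N+1)/2}\exp(-c\,d(x,y)^2/t)$ for $t\in(0,1]$; for $t$ bounded away from $0$ the kernel $K(t,\cdot,\cdot)$ and its derivatives are continuous on the compact set $M\times M$, hence uniformly bounded on $[t_0,T]$. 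Integrating the Gaussian against $dv(y)$ gives $\sup_x\int_M|\nabla_x K(t,x,y)|\,dv(y)\le C(T)\,t^{-1/2}$ for $t\in(0,T]$, and since $\nabla S_tf(x)=\int_M\nabla_x K(t,x,y)f(y)\,dv(y)$ this is the first estimate in \eqref{eq1propheatsemomansmoo}. For the second (smoothing from the left) I would use the self-adjointness of $\De_{LB}$ for $dv$, so $K(t,x,y)=K(t,y,x)$, and the fact that $M$ is closed: integration by parts (no boundary term) gives $S_t(\nabla f)(x)=\int_M K(t,x,y)\nabla_y f(y)\,dv(y)=-\int_M \nabla_y K(t,x,y)\,f(y)\,dv(y)$, and $|\nabla_y K(t,x,y)|=|\nabla_y K(t,y,x)|$ obeys the same Gaussian bound, yielding the same $t^{-1/2}$ estimate.

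For part~(ii) I would run a parabolic maximum-principle argument. With $u=S_tf$ and $v=\nabla u$, the Bochner formula shows $\pa_t v=\De^{(1)}v-\mathrm{Ric}(v)$, hence, using $\tfrac12\De_{LB}|v|^2=\langle\De^{(1)}v,v\rangle+|\nabla v|^2$,
\[
\pa_t|v|^2=\De_{LB}|v|^2-2|\nabla v|^2-2\,\mathrm{Ric}(v,v)\le \De_{LB}|v|^2+2K|v|^2,
\]
where $-\mathrm{Ric}\le K g$ for some $K$ (finite since $M$ is compact). The scalar maximum principle applied to $|v|^2$ then gives $\|\nabla S_tf\|_{C(M)}\le e^{Kt}\|\nabla f\|_{C(M)}$, and combined with the contraction property $\|S_tf\|_{C(M)}\le\|f\|_{C(M)}$ this yields \eqref{eq2propheatsemomansmoo} with a constant depending only on $K$ and $T$.

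For part~(iii), in the invariant language of Remark~\ref{reminvartriple} the natural extension of $S_t$ to covector fields is the connection heat semigroup $P_t=e^{t\De^{(1)}}$, which is a contraction on continuous covector fields: for $\psi=P_t\eta$ one has $\pa_t|\psi|^2=\De_{LB}|\psi|^2-2|\nabla\psi|^2\le\De_{LB}|\psi|^2$, so the maximum principle applies. Combining the Bochner formula from (ii) with Duhamel's principle gives, for $f\in C^2(M)$,
\[
\nabla S_tf=P_t\nabla f-\int_0^t P_{t-s}\,\mathrm{Ric}(\nabla S_sf)\,ds,
\]
so $[\nabla,S_t]f=\nabla S_tf-P_t\nabla f=-\int_0^t P_{t-s}\,\mathrm{Ric}(\nabla S_sf)\,ds$ satisfies $\|[\nabla,S_t]f\|_{C(M)}\le C'\int_0^t\|\nabla S_sf\|_{C(M)}\,ds\le C''\int_0^t s^{-1/2}\,ds\,\|f\|_{C(M)}=2C''\sqrt{t}\,\|f\|_{C(M)}$ by part~(i); since the right-hand side is well defined for every $f\in C(M)$, this exhibits $[\nabla,S_t]$ as a bounded operator on $C(M)$, uniformly (indeed vanishing as $t\to0$) on compact time intervals. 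The main obstacle I expect is the bookkeeping around the non-commutativity of $\nabla$ and $S_t$ in parts (ii) and (iii): one has to isolate the curvature term cleanly via Bochner and rely on the sharp $t^{-1/2}$ gain of (i) to make the Duhamel integral converge. The heat-kernel gradient bound itself, and the ``from the left'' identity (which uses the symmetry $K(t,x,y)=K(t,y,x)$ and $\pa M=\varnothing$), are the other points needing a little care, but are entirely standard on a compact manifold.
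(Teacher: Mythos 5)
Your proposal is correct, and for the substantive parts it takes a genuinely different route from the paper. Part (i), first estimate, coincides with the paper's argument: both rest on the Gaussian bound for $\nabla_x K(t,x,y)$ with the extra factor $t^{-1/2}$ (the paper cites Davies' Theorem 6) integrated against $dv(y)$. After that the strategies diverge. The paper makes (iii) the central claim and proves it by the parametrix (frozen-coefficients) method: it writes $K=K_{as}+\int_0^t K(t-s)F(s)\,ds$, observes that the correction term has bounded derivative, and reduces the commutator to the boundedness of the integral operator with kernel $\pa_x K_{as}-\pa_y K_{as}$, which is controlled because $K_{as}$ is a frozen-coefficient Gaussian; it then deduces (ii) and the left-smoothing half of (i) from (iii) via the abstract duality/commutator machinery of Theorem \ref{thAindualtriples}. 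You instead exploit the Riemannian structure directly: the Bochner--Weitzenb\"ock commutation $\nabla\De_{LB}=(\De^{(1)}-\mathrm{Ric})\nabla$ gives (ii) by the parabolic maximum principle (with the sharp constant $e^{Kt}$, $-\mathrm{Ric}\le Kg$), and Duhamel against the connection semigroup $P_t=e^{t\De^{(1)}}$ gives (iii) with the quantitative bound $O(\sqrt t)$, the integral converging precisely because of the $s^{-1/2}$ gain from (i). Your approach buys sharper constants and avoids any analysis of the parametrix error kernel, at the price of being tied to the Laplace--Beltrami operator (the paper's kernel method extends to more general generators with Gaussian-type parametrices).

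Two small points of care, neither a gap. First, in the left-smoothing estimate the identity $S_t(\nabla f)=-\int_M\nabla_yK(t,x,y)f(y)\,dv(y)$ is exact only modulo a bounded correction coming from the derivative of the volume density $\sqrt{\det g}$ (and a partition of unity if one works in coordinates); the correction is controlled by $\|K(t,x,\cdot)\|_{L_1}\|f\|_{C(M)}$ and is harmless. Second, your (i)-left and (iii) implicitly use two different extensions of $S_t$ to covector fields (componentwise scalar kernel versus $P_t$); in the invariant setting of Remark \ref{reminvartriple} you should fix one convention --- $P_t$ is the natural choice and makes your Duhamel identity exact --- and note that the two differ by an operator that is itself bounded, so the conclusion of (iii) is unaffected.
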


\begin{proof} The first estimate in \eqref{eq1propheatsemomansmoo}
 is a consequence of the well-known estimate for the derivatives of the heat kernel
on a compact Riemannian manifold (see Theorem 6 in \cite{Davies89}):
\begin{equation}
\label{eqheatsemder}
\|\nabla K(t,x,y)\|_M \le C(\de,N) t^{-N/2} t^{-1/2} \exp \left\{ -\frac{d^2(x,y)}{(4+\de)t}\right\},
\end{equation}
with any $\de>0$ and a constant $C(\de,N)$, where the derivative $\nabla$
is taken with respect to $x$, and where $d$ is the Riemannian distance in $M$.
In fact, differentiating \eqref{eqheatsem} and using \eqref{eqheatsemder} yields
the first estimate \eqref{eq1propheatsemomansmoo}.

As shown in Theorem \ref{thAindualtriples}, the second estimate  in \eqref{eq1propheatsemomansmoo} follows from
the first estimate and (iii). Also (ii) follows from (iii). Thus it remains to show (iii).
And this follows from \eqref{eqheatsemder} and the method of parametric (frozen coefficients) approximation.
This method (see e.g. formula (5.60) in \cite{Kobook19}) starts by representing $K$ in terms of its asymptotics $K_{as}$
and the integral correction as
\begin{equation}
\label{eq3propheatsemomansmoo}
K(t,x,y)= K_{as}(t,x,y)+\int_0^t K(t-s,x,z)F(s,z,y) \, ds,
\end{equation}
where $F$ is the error term in the equation for $K_{as}$, that is
\[
\frac{\pa K_{as}}{\pa t}(t,x,y)-\De_{LB} K_{as}(t,x,y)=-F(t,x,y).
\]
From \eqref{eqheatsemder} it follows that the derivative of the second term in \eqref{eq3propheatsemomansmoo}
is bounded and thus the statement about commutator reduces to the statement that the integral operator with the
integral kernel
\[
\frac{\pa K_{as}}{\pa x}(t,x,y)-\frac{\pa K_{as}}{\pa y}(t,x,y)
\]
is bounded in $C(M)$. And this is performed as in the case of heat equations in $\R^d$, as
it is implied by the fact that $K_{as}$ is Gaussian with 'frozen coefficients'.
\end{proof}

The following result is the direct consequence of Theorems \ref{propheatsemomansmoo}
and  \ref{thAindualtriples}, and the well known fact that the operator $\De_{LB}$ is
self-dual with respect to the coupling given by the integration with respect to the
volume measure $dv$ on $M$.

\begin{theorem}
 \label{propheatsemomansmoodu}
 The smoothing properties of the operators $S(t)$ also hold in the integral
 norms, that is,
\begin{equation}
\label{eq1propheatsemomansmoodu}
\|\nabla S_tf\|_{L_1(M)}\le C t^{-1/2} \|f\|_{L_1(M)},
\quad \|S_t \nabla f\|_{L_1(M)}\le C t^{-1/2} \|f\|_{L_1(M)},
 \end{equation}
 \begin{equation}
\label{eq2propheatsemomansmoodu}
\|S_tf\|_{W_1(M)}\le C \|f\|_{W_1(M)},
 \end{equation}
and the commutators $[\nabla,S_t]$ extend to bounded operators in $L_1(M)$,
 uniformly for any compact interval of time.
 \end{theorem}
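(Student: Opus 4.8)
The plan is to deduce Theorem~\ref{propheatsemomansmoodu} from Theorem~\ref{propheatsemomansmoo} by the duality machinery of Theorem~\ref{thAindualtriples}, using the fact that the two Banach triples $C_2(M)\subset C_1(M)\subset C(M)$ and $W_2(M)\subset W_1(M)\subset L_1(M)$ are dual in the sense of Section~\ref{secdualtriples} (with respect to the pairing $(f,h)=\int_M f(x)h(x)\,dv(x)$), that both are generated by $D=\nabla$ in the sense of Remark~\ref{reminvartriple}, and that $\De_{LB}$ is self-adjoint for this pairing, so that $A=A^b=A^*=\De_{LB}$ and $e^{At}=e^{A^*t}=S_t$ on the respective spaces. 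First I would record that $S_t$ is a strongly continuous semigroup in $L_1(M)$ with $C^2(M)\subset W_{1b}$-type core (quoted from the text), which is the standing hypothesis in Theorem~\ref{thAindualtriples}.

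The three assertions are then obtained in order as follows. For the second estimate in \eqref{eq1propheatsemomansmoodu}: by Theorem~\ref{propheatsemomansmoo}(i), $S_t$ is smoothing from the right of the $1$st order in the triple on $C(M)$, i.e.\ $\|\nabla S_t f\|_{C(M)}\le Ct^{-1/2}\|f\|_{C(M)}$; since $\De_{LB}^*=\De_{LB}$ with respect to the dual pairing, Theorem~\ref{thAindualtriples}(i) gives that $e^{A^*t}=S_t$ is smoothing from the \emph{left} of the $1$st order in the dual triple, which is exactly $\|S_t\nabla f\|_{L_1(M)}\le Ct^{-1/2}\|f\|_{L_1(M)}$. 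For the first estimate in \eqref{eq1propheatsemomansmoodu}: Theorem~\ref{propheatsemomansmoo}(i) also gives smoothing from the left in $C(M)$, so Theorem~\ref{thAindualtriples}(i) again gives smoothing from the right in the $L_1$-triple, namely $\|\nabla S_t f\|_{L_1(M)}\le Ct^{-1/2}\|f\|_{L_1(M)}$. For the commutator statement: by Theorem~\ref{propheatsemomansmoo}(iii) the operators $[\nabla,S_t]$ are uniformly bounded in $C(M)$; Theorem~\ref{thAindualtriples}(iii) transfers this to uniform boundedness of $[\nabla,S_t]$ in $B_b=L_1(M)$ via the identity $([D_j,e^{At}]f,h)=(f,[D_j,e^{A^*t}]h)$. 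Finally, \eqref{eq2propheatsemomansmoodu} (smoothness preservation $\|S_tf\|_{W_1(M)}\le C\|f\|_{W_1(M)}$) follows from the uniform boundedness of $[\nabla,S_t]$ in $L_1(M)$ just established together with $\|S_t\|_{L_1\to L_1}\le 1$, exactly as in Theorem~\ref{thAindualtriples}(ii)(a): $\|\nabla S_tf\|_{L_1}\le\|S_t\nabla f\|_{L_1}+\|[\nabla,S_t]f\|_{L_1}\le\|f\|_{W_1(M)}+C\|f\|_{L_1(M)}$.

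The only genuinely nontrivial point is the bookkeeping needed to apply Theorem~\ref{thAindualtriples} in the \emph{invariant} setting of Remark~\ref{reminvartriple}: here $\nabla$ does not map $C(M)$ into $C(M)^n$ but into the space of covector fields, and the duality $(D_jb,f)=-(b,D_jf)$ must be read as the integration-by-parts identity $\int_M(\nabla b,\,f)\,dv=-\int_M b\,(\div f)\,dv$ for a vector field $f$, which is the divergence theorem on the closed manifold $M$ (no boundary term, since $M$ is compact without boundary). I would note that all the duality computations in the proof of Theorem~\ref{thAindualtriples} go through verbatim once ``$\sup_{\|b\|\le1,\,b\in B_1}|(D_jb,\cdot)|$'' is interpreted as the dual norm on covector/vector fields induced by the Riemannian metric via \eqref{eqmagngrad}, and that the self-adjointness of $\De_{LB}=\div\nabla$ with respect to $dv$ is precisely this same integration by parts applied twice. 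This is the step I expect to require the most care to state cleanly, though it introduces no new analytic difficulty beyond what is already in Theorems~\ref{propheatsemomansmoo} and~\ref{thAindualtriples}.
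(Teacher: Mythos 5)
Your proposal is correct and follows essentially the same route as the paper, which simply declares the theorem a direct consequence of Theorems \ref{propheatsemomansmoo} and \ref{thAindualtriples} together with the self-duality of $\De_{LB}$ with respect to the volume pairing. You have merely made explicit the bookkeeping (which part of Theorem \ref{thAindualtriples} yields which estimate, and the integration-by-parts reading of the duality in the invariant setting of Remark \ref{reminvartriple}) that the paper leaves implicit.
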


For the stochastic control of diffusions on $(M,g)$ with the second order part being fixed as $\De_{LB}$,
and where control is carried out via the drift only (drift control of the Brownian motion on $M$),
the corresponding HJB equation is the equation
\begin{equation}
\label{eqHJBmanif}
\frac{\pa f}{\pa t}(t,x)= \De_{LB} f(t,x) +H(x,\nabla f (t,x)),
\end{equation}
where the Hamiltonian $H(x,p)$, $x\in M, p\in T^*M_x$ is a function on the cotangent bundle $T^*M$ of the form
\begin{equation}
\label{eqHJBmanif1}
H(x,p)=\sup_{u\in U} [(g(x,u),p)+J(x,u)],
\end{equation}
where $U$ is a compact set of possible controls and $J,g$ are some continuous functions
 and a vector field depending on $u$ as a parameter.
In case of zero-sum stochastic two-player games with the so-called Isaac's condition,
the Hamiltonian function takes the form
\begin{equation}
\label{eqHJBmanif2}
H(x,p)=\sup_{u\in U}\inf_{v\in V} [(g(x,u,v),p)+J(x,u,v)]=\inf_{v\in V}\sup_{u\in U} [(g(x,u,v),p)+J(x,u,v)].
\end{equation}
The possibility to exchange $\sup$ and $\inf$ here is called Isaac's condition. It is fulfilled, in particular,
when the control of two players can be separated in the sense that the Hamiltonian becomes
\begin{equation}
\label{eqHJBmanif3}
H(x,p)=\sup_{u\in U} [(g_1(x,u),p)+J_1(x,u)]+\inf_{v\in V}[(g_2(x,v),p)+J_2(x,v)]+J_0(x).
\end{equation}

The fractional version of equation \eqref{eqHJBmanif} is the fractional HJB equation
\begin{equation}
\label{eqHJBmaniffr}
D_{a+*}f(t,x)= \De_{LB} f(t,x) +H(x,\nabla f (t,x))
\end{equation}
(fractional derivative acts on the variable $t$ and $\nabla$ and $\De_{LB}$ act on the variable $x\in M$),
which describes the cost functions for the scaled continuous-time random walks with the spatial
motion govern by $\De_{LB}$ (see \cite{KoVe14}).

The McKean-Vlasov equation on $M$ describing a nonlinear diffusion on $M$ (with the operator $\De_{LB}$ as the main part and a nonlinear drift $h$) is the equation of the form
\begin{equation}
\label{eqMVmanif}
\frac{\pa f}{\pa t}(t,x)= \De_{LB} f(t,x) +(h(x, f(t,.)),\nabla f (t,x)),
\end{equation}
where $h$ maps pairs $(x, f(t,.))$ to the elements of the tangent space $T_xM$ of $M$ at $x$
(so that the coupling $(h(x, f(t,.)),\nabla f (t,x))$ yields a well defined function on $M$).
Thus $h$ can be looked at as a vector field on $M$ depending on $f$ as a parameter. The
dependence of $h$ on $f$ via certain integrals, that is,
\begin{equation}
\label{eqMVmanifHam}
h(x, f(t,.))=h(x, \int_M g(x) f(t,x)  \, dv(y)),
\end{equation}
where $g$ is some bounded function on $M$, is typical in applications.

The fractional version of equation \eqref{eqMVmanif} is, of course, the equation
\begin{equation}
\label{eqMVmaniffr}
D_{a+*}f(t,x)= \De_{LB} f(t,x) +(h(x, f(t,.)),\nabla f (t,x)).
\end{equation}

The following results are direct consequences of Theorems
\ref{propheatsemomansmoo}, \ref{propheatsemomansmoodu}, \ref{thabsrtHJBMVwell}
and \ref{thabsrtHJBMVwellfr} (and Remark \ref{reminvartriple}).

\begin{theorem}
\label{thHJBwellma}
(i) Let $H(x,p)$ be a continuous function on the cotangent bundle
$T^*M$ to the compact Riemannian manifold $(M,g)$ such that
\begin{equation}
\label{eq1thHJBwellma}
|H(x,p_1)-H(x,p_2)| \le L_H \|p_1-p_2\|_x
\end{equation}
for all $x$ with a constant $L_H$. Then for any $Y\in C^1(M)$ there exists
 a unique mild solution for the Cauchy problem
of equation \eqref{eqHJBmanif} and a unique solution for the Cauchy problem of equation
\eqref{eqHJBmaniffr} with initial condition $f(0,.)=Y$. These solutions depend Lipschitz
continuously on the initial data in the norm of $C^1(M)$.

(ii) Let $h(x, f(.))$ be a uniformly bounded vector field on $M$ depending Lipschitz continuously on $f$
in the norm of $L_1(M)$:
\begin{equation}
\label{eq2thHJBwellma}
\|h(x,f_1(t,.))-h(x,f_2(t,.))\|_x \le L_h \|f_1(t,.)-f_2(t,.)\|_{L_1(M)},
\end{equation}
for all $x\in M$ with a constant $L_h$.
Then for any $Y\in W_1(M)$ there exists a unique mild solution for the Cauchy problem
of equation \eqref{eqMVmanif} and a unique solution for the Cauchy problem of equation
\eqref{eqMVmaniffr} with initial condition $f(0,.)=Y$. These solutions depend Lipschitz
continuously on the initial data in the norm of $W_1(M)$.
\end{theorem}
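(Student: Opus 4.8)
The plan is to verify that Theorem~\ref{thHJBwellma} follows by specializing the abstract results of Sections~\ref{abstrMVHJB} and \ref{secdualtriples} to the concrete Banach triples on the manifold, with Theorems~\ref{propheatsemomansmoo} and \ref{propheatsemomansmoodu} supplying exactly the semigroup hypotheses that Conditions~(A) demand. First I would set up part (i): take $B=C(M)$, $B_1=C^1(M)$, $B_2=C^2(M)$, with the operator $D=\nabla$ generating this triple in the sense of \eqref{eqnormforgentri} (as explained in Remark~\ref{reminvartriple}, the mild abuse of identifying $\nabla$ with an $n$-tuple of operators into $B^n$ is harmless, since the topologies of $C^1(M)$ and $C^2(M)$ are still generated by $\nabla$ and $\nabla^2$). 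The operator $A=\De_{LB}$ lies in $\LC(C^2(M),C(M))$ and, by \eqref{eqheatsem} and the remarks following it, generates the strongly continuous contraction semigroup $S_t$ in $C(M)$ with invariant core $C^2(M)$; Theorem~\ref{propheatsemomansmoo}(ii) gives the boundedness of $S_t$ in $C^1(M)$, so the pair of estimates \eqref{AgeninBB1} holds (with $M=M_1=1$ up to the uniform constant and $m=m_1=0$ on compact time intervals). Theorem~\ref{propheatsemomansmoo}(i), first estimate, is precisely the smoothing property \eqref{eqsmoothimngas} (equivalently \eqref{eqsmoothimngasgentr}) with $\om=1/2$.

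Next I would check Conditions~(A)(iii) for the Hamiltonian. Since \eqref{eqHJBmanif1} (or the Isaacs form \eqref{eqHJBmanif2}, \eqref{eqHJBmanif3}) exhibits $H$ as a sup (or $\inf\sup$) over the compact set $U$ of affine functions of $p$, the hypothesis \eqref{eq1thHJBwellma} gives the Lipschitz bound in $p$ with constant $L_H$; here $H$ does not depend on the value $f$ itself, so \eqref{HamLipsch} holds with $L'_H=0$ (matching the remark after Conditions~(A)), there is no external parameter so \eqref{HamLipschpar} is vacuous, and the linear growth \eqref{HamLingrC} in $\|\nabla f\|$ follows from \eqref{eqHJBmanif1} with the compactness of $U$ bounding $J(\cdot,u)$ and $\|g(\cdot,u)\|$. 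With all of Conditions~(A) and \eqref{eqsmoothimngas} in place, Theorem~\ref{thabsrtHJBMVwell} gives global well-posedness of the classical mild equation \eqref{eqHJBmanif} in $B_1=C^1(M)$ and Theorem~\ref{thabsrtHJBMVwellfr} gives the same for the fractional equation \eqref{eqHJBmaniffr}, together with the Lipschitz dependence \eqref{eq00thabsrtHJBMVwell} on the initial data in the $C^1(M)$-norm. This is all of part (i).

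Part (ii) is the dual picture. Here I take the triple $W_2(M)\subset W_1(M)\subset L_1(M)$, again generated by $D=\nabla$ via the Sobolev norms defined above, and the generator is $A^*=\De_{LB}$, which is self-dual with respect to the $L_1$--$C$ coupling given by integration against $dv$ (the fact invoked just before Theorem~\ref{propheatsemomansmoodu}). Theorem~\ref{propheatsemomansmoodu} provides exactly the analogues of the $C(M)$-estimates in the integral norms: strong continuity and boundedness of $S_t$ in $L_1(M)$, boundedness in $W_1(M)$ (so \eqref{AgeninBB1} holds for this triple), and the smoothing estimate \eqref{eqsmoothimngas} with $\om=1/2$; $W_2(M)$ serves as the invariant core. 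Writing the McKean--Vlasov drift term $(h(x,f(t,.)),\nabla f(t,x))=\sum_j h_j(t,x,\{f(t,.)\})\,\pa f/\pa x_j$ in the form of the abstract Hamiltonian, the uniform boundedness of the vector field $h$ gives linear growth \eqref{HamLingrC} in $\|\nabla f\|_{L_1}$, and the Lipschitz hypothesis \eqref{eq2thHJBwellma} of $h$ in the $L_1(M)$-norm yields \eqref{HamLipsch}; note that since $h$ genuinely depends on $f$ (through integrals as in \eqref{eqMVmanifHam}) this is the case $L'_H\neq 0$, which is still covered by Theorems~\ref{thabsrtHJBMVwell} and \ref{thabsrtHJBMVwellfr} via the preliminary uniform-boundedness step in their proofs. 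Invoking those two theorems again then gives the unique mild solutions of \eqref{eqMVmanif} and \eqref{eqMVmaniffr} in $W_1(M)$, with Lipschitz dependence on the initial data in the $W_1(M)$-norm.

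The proof is essentially a bookkeeping exercise once the hard analytic inputs are granted, and the one genuinely delicate point is making sure that the abstract operator-tuple framework of Sections~\ref{abstrMVHJB}--\ref{secdualtriples} really applies on a manifold, where $\nabla f$ is a covector field rather than an $n$-tuple of scalar functions: the step I expect to require the most care is invoking Remark~\ref{reminvartriple} to justify that the norms \eqref{eqnormC1M}, \eqref{eqnormC2M}, and their $W_j(M)$ counterparts are \emph{generated} by $D=\nabla$ and $D^2=\nabla^2$ in the sense of \eqref{eqnormforgentri}, and that consequently the smoothing and commutator statements of Theorems~\ref{propheatsemomansmoo} and \ref{propheatsemomansmoodu} discharge precisely Conditions~(A)(i)--(ii) and the smoothing hypothesis \eqref{eqsmoothimngas} in this slightly generalized invariant setting. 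Once that identification is made, the remaining verifications — Lipschitz and linear-growth bounds on $H$ and $h$ from \eqref{eq1thHJBwellma} and \eqref{eq2thHJBwellma} together with compactness of $U$ — are routine, and the conclusions follow directly from the cited abstract theorems.
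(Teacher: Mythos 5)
Your proposal is correct and follows exactly the route the paper takes: the paper's own proof consists of the single assertion that the theorem is a direct consequence of Theorems \ref{propheatsemomansmoo}, \ref{propheatsemomansmoodu}, \ref{thabsrtHJBMVwell} and \ref{thabsrtHJBMVwellfr} together with Remark \ref{reminvartriple}, and your write-up simply supplies the bookkeeping (choice of triples, verification of Conditions (A), the $L'_H=0$ versus $L'_H\neq 0$ distinction) that the paper leaves implicit.
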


\section{Fractional forward-backward systems on manifolds}
\label{seceqsysman}

Finally, we analyse the analogue of system \eqref{eqforbackHJBMVfr} on manifolds with the
generator $A=\De_{LB}$:
\begin{equation}
\label{eqforbackHJBMVfrma}
\left\{
\begin{aligned}
& D_{a+*} g(t,x)=\De_{LB}g(t,x)+(h(t,x, g(t,.),u(t,x, \nabla f(t,x))) \nabla g (t,x),
\quad g(a)=Y, \\
& D_{T-*} f(t,x)=-\De_{LB} f(t,x)+H(t,x,\nabla f(t,x), g_{\ge t}), \quad f(T)=Z.
\end{aligned}
\right.
\end{equation}

The following result is a consequence of Theorems \ref{thforbackindual},
 \ref{propheatsemomansmoo} and \ref{propheatsemomansmoodu}.

\begin{theorem}
\label{thforbackma}
Let $U$ be a compact set in a Euclidean space and
$u(t,x,p)\in U$ be a continuous function of the triple $t\in \R$, $x\in M$, $p\in T^*M_x$
which is Lipschitz continuous in the last argument:
\begin{equation}
\label{eqcontLipconma}
|u(t,x, p)-u(t,x, \tilde p)|
\le L_u \|p-\tilde p\|_x.
\end{equation}

Let $h:\R\times M\times L_1(M)\times U\to TM$ be a continuous
mapping such that $h(t,x,f,u)\in TM_x$ for all $x,f,u$,  which
is Lipschitz in the sense that
\begin{equation}
\label{HamLipschrepma}
\|h(t,x,g(t,.), u)-h(t,x,\tilde g(t,.),\tilde u)\|_x
\le L_h \|g(t,.)-\tilde g(t,.)\|_{L_1(M)}+L_{hu} |u-\tilde u|,
\end{equation}

Let $H(t,x,p, g_{\ge t})$, $t\in \R, x\in M, p\in T^*M_x, g\in L_1(M)$
be a continuous function,
which is Lipschitz in the sense that
\begin{equation}
\label{HamLipschrepbma}
|H(t,x,p,g(.))-H(t,x, \tilde p,g(.))|
\le L_H \|p-\tilde p\|_x ,
\end{equation}
\begin{equation}
\label{HamLipschparrepbma}
|H(t,x,p,g(.)) -H(t,x,p,\tilde g(.))|
\le L_H\|f-\tilde f\|_{C([a,T],B)} (1+\sum_{j=0}^n \|f_j\|_b).
\end{equation}
Then for all $Y,Z$  there exist $T_0$ such that the system \eqref{eqforbackHJBMVfrma}
has a unique solution for all $T\le T_0$.
\end{theorem}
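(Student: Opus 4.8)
The plan is to recognise system \eqref{eqforbackHJBMVfrma} as a concrete realisation of the abstract fractional forward-backward system \eqref{eqabstrforbackfr} with $A^b=A^*$, and to deduce the statement from Theorem \ref{thforbackindual}. First I would fix the dual Banach triples: the Sobolev triple $W_2(M)\subset W_1(M)\subset L_1(M)$ for the forward (McKean-Vlasov) equation and $C^2(M)\subset C^1(M)\subset C(M)$ for the backward (HJB) equation, both generated by $D=\nabla$ in the mildly generalised sense of Remark \ref{reminvartriple} --- so that the norms \eqref{eqnormC1M}, \eqref{eqnormC2M} and their $W_j(M)$ analogues remain generated by $\nabla$ and $\nabla^2$ and the abstract machinery applies verbatim. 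Then I would record that $(L_1(M),C(M))$ is a dual pair (the standard $AL$-$AM$ example), that the gradients are dual, $(\nabla b,f)=-(b,\nabla f)$, and that $\De_{LB}$ is self-dual for integration against the Riemannian volume $dv$, so that $A^*=\De_{LB}$; with the (quoted) facts that $\De_{LB}$ generates strongly continuous contraction semigroups in both $C(M)$ and $L_1(M)$ whose invariant cores $C^2(M)$ and $W_2(M)$ lie inside $C^1(M)$ and $W_1(M)$, this establishes conditions (AFB)(i)-(ii). The smoothing and commutator hypotheses on $\De_{LB}$ required by Theorem \ref{thforbackindual} are then precisely what Theorems \ref{propheatsemomansmoodu} and \ref{propheatsemomansmoo} provide: $S_t=e^{t\De_{LB}}$ is smoothing of the first order from both sides with $\om=1/2$, the commutators $[\nabla,S_t]$ extend to operators bounded uniformly on compact time intervals in $L_1(M)$ and in $C(M)$, and $S_t$ is bounded on $W_1(M)$ and on $C^1(M)$ as in \eqref{AgeninBB1}; the transfer between the two triples is Theorem \ref{thAindualtriples}.

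The bulk of the work is then the routine verification of conditions (AFB)(iii)-(iv) for the concrete data. I would take $B^{con}$ to be the Banach space of continuous maps $M\to\R^k$ with $U\subset\R^k$ compact, the $U$-valued maps forming a closed complete subset treated as in the Euclidean case. Up to the identification of Remark \ref{reminvartriple}, the abstract forward Hamiltonian at $(t;b_0;b_1,\dots,b_n;u)$ is the scalar field $x\mapsto h(t,x,b_0,u(x))\cdot p(x)$, where $p$ is the covector field assembled from $b_1,\dots,b_n$; uniform boundedness of $h$ yields the linear growth \eqref{HamLingrC}, while writing $h(b_0)\cdot p-h(\tilde b_0)\cdot\tilde p=(h(b_0)-h(\tilde b_0))\cdot p+h(\tilde b_0)\cdot(p-\tilde p)$ and invoking \eqref{HamLipschrepma} yields \eqref{HamLipschrep} with a nonzero $L'_H$ --- the unavoidable linear growth of the Lipschitz constant for McKean-Vlasov nonlinearities --- and the $L_{hu}$-term of \eqref{HamLipschrepma} yields the control-dependence \eqref{HamLipschparrep}. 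The abstract backward Hamiltonian $H^b$ depends on $f$ only through $\nabla f$ and on the future $g_{\ge t}$ of the forward curve, so \eqref{HamLipschrepbma} yields \eqref{HamLipschrepb} with $(L^b_H)'=0$ and \eqref{HamLipschparrepbma} yields \eqref{HamLipschparrepb} (a bounded additive term, present for any $\sup$-type Hamiltonian, being absorbed in the usual way), and \eqref{eqcontLipconma} yields the Lipschitz estimate \eqref{eqcontLipcon} for the coupling $u(t,x,\nabla f(t,x))$ in the $C^1(M)$-norm. With all of (AFB) in force, Theorem \ref{thforbackindual} delivers $T_0>0$ for which \eqref{eqforbackHJBMVfrma} has a unique mild solution on $[a,T]$ for every $T\le T_0$, $Y\in W_1(M)$ and $Z\in C^1(M)$.

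I expect the difficulty to lie not in analysis but in care: the entire argument reduces to the already-proved Theorems \ref{thforbackindual}, \ref{propheatsemomansmoo} and \ref{propheatsemomansmoodu}, and the one genuinely delicate point is the geometric reinterpretation of Remark \ref{reminvartriple} --- one has to check that the heat-kernel derivative bound \eqref{eqheatsemder}, on which both Theorem \ref{propheatsemomansmoo} and the commutator estimate ultimately rest, makes the gradient $\nabla$ behave, at the level of the norms in play, exactly like the abstract coordinate operator $D=(D_1,\dots,D_n)$. A secondary, minor point is that $B^{con}=C(M,U)$ is only a closed subset of a Banach space, so the fixed-point iteration has to be run inside the corresponding complete metric subspace, exactly as for the Euclidean system \eqref{eqforbackHJBMVfr}; and no small-time restriction beyond the one already built into Theorem \ref{thforbackindual} --- inherited, through the reduction to a single anticipating equation, from the contraction argument of Theorem \ref{thabsrtHJBMVan} --- is needed.
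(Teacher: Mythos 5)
Your proposal is correct and follows exactly the route the paper intends: the paper gives no written proof beyond declaring the theorem a consequence of Theorems \ref{thforbackindual}, \ref{propheatsemomansmoo} and \ref{propheatsemomansmoodu}, and your argument is precisely that deduction, with the verification of conditions (AFB)(iii)--(iv) and the choice of dual triples spelled out in more detail than the paper itself provides.
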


\end{document}